\documentclass[12pt]{article}
\usepackage{amsmath}
\usepackage{amssymb}
\usepackage{graphicx}
\usepackage[numbers,sort&compress]{natbib}
\usepackage[colorlinks,
            anchorcolor=blue,
            citecolor=green]
            {hyperref}
\title{Well-posedness for the $\bar\partial$-problem relevant to the AKNS spectral problem}

\author{Junyi Zhu\thanks{E-mail: jyzhu@zzu.edu.cn}, Huan Liu\thanks{E-mail: liuhuan@zzu.edu.cn}\\
\scriptsize{\sl School of Mathematics and Statistics, Zhengzhou University, Zhengzhou, Henan 450001, China}\\
}

\date{}
\begin{document}
\newtheorem{theorem}{Theorem}
\newtheorem{definition}{Definition}
\newtheorem{lemma}{Lemma}
\newtheorem{corollary}{Corollary}
\newtheorem{proposition}{Proposition}
\newtheorem{remark}{Remark}
\hypersetup{CJKbookmarks=true}

\maketitle
\begin{abstract}
The well-posedness for the Dbar problem associated with the AKNS spectral problem is considered. In general, the relevant Dbar equation with normalization condition is equivalent to an integral equation, where the kernel involves exponents $\mathrm{e}^{\pm2ikx}$ with physical variable $x$ as a parameter. We develop a decomposition technique to control the convergence of the integral by defining a new integral operator $RT_{\mathbb{C}}(k;x)$.  The small norm condition of the operator is obtained to show that there exists a unique solution for the Dbar problem. Moreover, the Dbar dressing method is extended to construct the AKNS spectral problem and the potential construction is presented via the Dbar data. Prior estimates are given to show that the map from the Dbar data to the AKNS potential is Lipschitz continuous.


Keywords: well-posedness; Dbar problem; AKNS spectral problem; decomposition technique
\end{abstract}
 \tableofcontents

\section{Introduction}

 The $\bar\partial$ (Dbar) problem can be regarded as a generalization of the Cauchy-Riemann equation for non-analytical functions. Since Beals and Coifman introduced the Dbar problem
  to study the integrable system \cite{B-C1981,B-C1982,ip5-87}, Dbar problem has played important roles to construct explicit solutions \cite{ip4-123,D-L2007,faa19-89,jpa21-L537,KBG1992,KBG1993,Santini2003,sam69-135,jpa46-035204,mpag17-49,aml66-47,wm115-103051,sam148-433,amp13-51,pla499-129359,jgp177-104550,aml107-107297,
 aml117-107143,FEG2022,jmp62-093510,jnmp28-492,aml134-108378,nd111-3689,nd111-5655,nd111-8673,jnmp30-201,aml140-108589,prsla481-20250342,ps100-115237,prsa481-20240764}, and to discuss the asymptotic behaviors of the nonlinear integrable system \cite{imrn2006-48673,jmp63-113504,cmp402-2879,jde329-31,aim426-109088,jde386-214,jde447-113685,qtds24-106,pd456-133915,jde377-121,jmp66-041502,aim484-110552,sam155-e70093}.
 In particular, it is effective in case where the eigenfunction is nowhere analytic, failure for the classical Riemann-Hilbert problem, but the Dbar problem remains valid \cite{sam69-135}.

For integrable system, one needs to consider the following nonhomogeneous Dbar equation
\begin{equation}\label{dba6}
 \bar\partial\psi(k,\bar{k}):=\frac{\partial\psi(k,\bar{k})}{\partial\bar{k}}=\psi(k,\bar{k})R(k,\bar{k}),
 \end{equation}
where $R(k,\bar{k})$ is called the spectral transformation matrix function, or a distribution. Under the normalization conditon $\psi(k,\bar{k})\to I, k\to\infty$, the Dbar problem is equivalent to a integral equation
\begin{equation}\label{dbaa7}
\psi(k,\bar{k})=I+\psi R{T}_{\mathbb{C}}(k),
\end{equation}
where ${T}_{\mathbb{C}}(k)$ acting to the left is called Cauchy-Green operator
\begin{equation}\label{dbaa8}
\psi R{T}_{\mathbb{C}}(k)=\frac{1}{2i\pi}\iint_{\mathbb{C}}\frac{\psi(z,\bar{z})R(z,\bar{z})}{z-k}\mathrm{d}z\wedge \mathrm{d}\bar{z}.
\end{equation}

It is worth noting that a matrix integrable operator
\[\mathcal{K}[\nu](z,\bar{z})=\iint_\mathcal{D}\mathbf{K}(z,\bar{z};w,\bar{w})\nu(w)\frac{\mathrm{d}\bar{w}\wedge \mathrm{d}w}{2i}, \quad z,\bar{z}\in \mathcal{D}\]
\[\mathbf{K}(z,\bar{z};w,\bar{w})=\frac{f^T(z,\bar{z})g(w,\bar{w})}{z-w}, \quad f^T(z,\bar{z})g(z,\bar{z})=0,\]
on a bounded domain $\mathcal{D}$ is considered in \cite{non37-085008}, and the resolvent of the integral operator $Id - \mathcal{K}$ is obtained through the solution of a Dbar-Problem
\[\begin{aligned}
\bar\partial\Gamma(z,z)=\Gamma(z,z)\mathbf{M}(z,\bar{z}); \quad \Gamma\to I, z\to\infty,\\
\mathbf{M}(z,\bar{z})=\pi f(z,\bar{z})g^T(z,\bar{z})\chi_\mathcal{D}(z).
\end{aligned}\]
In the following, we will omit the variable $\bar{k}$ and $\bar{z}$ in functions, just write them as $\psi(k)$ and $R(k)$.

For a certain Lax integrable system with linear spectral problem $\partial_x\phi(x;k)=U(x;k)\phi(x;k)$, the potential matrix $U$ is an important characteristic. For convenience, the variable $x\in\mathbb{R}$ is called a physical variable, and $k\in\mathbb{C}$ is a spectral parameter. We note that the spectral transformation matrix $R(k)$ to the relevant Dbar problem will take the same role as the potential matrix $U$ to character the nonlinear integrable PDE. The so-called relevant Dbar problem means that the spectral transformation matrix $R(k)$ should satisfy an certain evolution equation about the physical variable. In this paper, we will discuss the AKNS spectral problem \cite{sam53-249} or Zakharov-Shabat spectral problem \cite{spj34-62} with
\[U=-ik\sigma_3+Q(x), \quad  \sigma_3=\left(\begin{matrix}
1&0\\
0&-1
\end{matrix}\right),\quad Q=\left(\begin{matrix}
0&u\\
v&0
\end{matrix}\right).\]
The relevant Dbar problem with $R(k;x)$ admitting $\partial_xR=-ik[\sigma_3,R]$, which implies that there are exponents $\mathrm{e}^{\pm2ikx}, x\in\mathbb{R},k\in\mathbb{C}$ in $R$, and they will enter the kernel of the integral \eqref{dbaa7}.
There is one key question: In what space the integral in \eqref{dbaa7} with exponents in its kernel and with the whole $k$ plane as its region of integration is convergent?

We remarked that the properties of the integral \eqref{dbaa8} without exponents and physical parameters have been well studied in \cite{INV1962}. Although the Dbar problem has been introduced to investigate nonlinear integrable PDEs, the convergence of the integral with exponents and physical parameters was rarely considered. In the existing applications of Dbar problem to integrable system, an important prerequisite is the existence of the inverse operator $(\mathcal{I}-RT_\mathbb{C})^{-1}$ \cite{ijmpb4-1003,ip5-87,ip4-123,D-L2007,jpa46-035204,mpag17-49}. To ensure this assumption makes sense, a small operator norm for the operator $RT_\mathbb{C}$ should be valid. However, the exponential factors are not always bounded, for example, see $|\mathrm{e}^{\pm2ikx}|=\mathrm{e}^{\mp2x{\rm Im}k}, x\in\mathbb{R},k\in\mathbb{C}$. One needs to give a decomposition for $k\in\mathbb{C}$ and $x\in\mathbb{R}$ and split the spectral transformation matrix $R(k;x)$.

We give the following assumptions:
\begin{itemize}
\item[i.] The spectral transformation matrix $R(k;x)$ is off-diagonal matrix
\[R(k;x)=\left(\begin{matrix}
0& r_+(k)\mathrm{e}^{-2ikx}\\
r_-(k)\mathrm{e}^{2ikx}&0
\end{matrix}\right).\]
Since $\partial_x{\rm diag}R = 0$, we assume that the diagonal part of the spectral transformation matrix $R(k;x)$ is zero.
\item[ii.] There are no Dirac $\delta$ functions in the spectral transformation matrix $R(k;x)$. The soliton solutions for integrable equation are given by the discrete spectra which are related to the Dirac $\delta$ functions in the spectral transformation matrix $R(k;x)$. In general, $R$ is related to the discrete spectrum and continuous spectrum \cite{ip4-123}. If $R$ only contains the Dirac-$\delta$ function, then integral equation \eqref{dbaa7} can be reduced to a closed linear system. The solvability of the Dbar problem or the equation \eqref{dbaa7} will be simple. However, there will be a logical problem. In fact, the condition of $R$ only containing the Dirac-$\delta$ function implies that the spectral analysis is conducted only with discrete spectrum, which is inconsistent with the inverse scattering transform. Otherwise, the equation \eqref{dbaa7} will contain the integral part given by the associated continuous spectrum. Moreover, the Dirac $\delta$ function is a generalized function, neither H\"older continuous and nor $L^p$ function. For simplicity, we exclude the Dirac $\delta$ functions in $R$.
\end{itemize}

In this paper, we shall develop a decomposition technique to discuss the well-posedness of the Dbar problem relevant to the AKNS spectral problem. To this end, we introduce two nilpotent matrices $w_\pm(k;x)$ admitting $R=w_-+w_+$ and define a new operator $RT_\mathbb{C}$ in the following
\begin{equation}\label{dba10}
\begin{aligned}
 \psi RT_\mathbb{C}(k;x)=&[\psi w_-T_{\mathbb{C}^+}(k;x)+\psi w_+T_{\mathbb{C}^-}(k;x)]\chi_{\{x>0\}}\\
 &+[\psi w_-T_{\mathbb{C}^-}(k;x)+\psi w_+T_{\mathbb{C}^+}(k;x)]\chi_{\{x<0\}},
\end{aligned}
\end{equation}
where $\chi$ is the indicator function of the half lines $x\gtrless0$ and the region of integrations $\mathbb{C}^\pm$ be upper and lower half-planes. We note that the exponential factors in integrals $\psi w_\pm T_{\mathbb{C}^\mp}(k;x)\chi_{\{x>0\}}$ and $\psi w_\pm T_{\mathbb{C}^\pm}(k;x)\chi_{\{x<0\}}$ are bounded.

Now we introduce a function set
\[ C^\alpha(\bar{G})=C(\bar{G})+h^\alpha(\bar{G}), \quad 0<\alpha\leq1, \]
where $G$ is a bounded domain and
\[C(\bar{G})=\{f(k): \sup\limits_{k\in \bar{G}}|f(k)|<\infty\},\]
\[h^\alpha(\bar{G})=\left\{f(k): \sup\limits_{k_1,k_2\in \bar{G}}\frac{|f(k_1)-f(k_2)|}{|k_1-k_2|^\alpha}<\infty, (k_1\neq k_2) \right\}.\]
For $f(k)\in C^\alpha(\bar{G})$, the norm is defined by
\begin{equation}\label{dbb9}
 \|f\|_{C^\alpha(\bar{G})}=\|f\|_{L^\infty(\bar{G})}+\|f\|_{h^\alpha(\bar{G})},
\end{equation}
where
\[\|f\|_{L^\infty(\bar{G})}=\sup\limits_{k\in \bar{G}}|f(k)|, \quad \|f\|_{h^\alpha(\bar{G})}=\sup\limits_{k_1,k_2\in \bar{G}}\frac{|f(k_1)-f(k_2)|}{|k_1-k_2|^\alpha}.\]
Thus $C^\alpha(\bar{G})$ is a Banach space.

Let $f(k)$ be given on the $k\in\mathbb{C}$ plane, and satisfies the conditions
\begin{equation}\label{dba1}
f(k)\in L^p(E_1), \quad f_{(\nu)}(k)=|k|^{-\nu}f(k^{-1})\in L^p(E_1),
\end{equation}
where $E_1=\{k:|k|\leq1\}$ and $\nu$ is a real number. The set of such functions will be denoted by $L^{p,\nu}(\mathbb{C})$. For $f(k)\in L^{p,\nu}(\mathbb{C})$, we define the norm by
\begin{equation}\label{dba2}
\|f\|_{L^{p,\nu}(\mathbb{C})}=\|f\|_{L^p(E_1)}+\|f_{(\nu)}\|_{L^p(E_1)}.
\end{equation}
It is verified that the space $L^{p,\nu}(\mathbb{C})$ is a Banach space.

To discuss the Dbar problem relevant to the AKNS problem, one need to consider matrix functions. For the matrix $\phi(k)$, we let $\|\phi(k)\|$ be a certain matrix norm, and $\|\phi(k)\|_{\mathbf{L}^p}$ be the $L^p$ norm related to the matrix norm. To give the prior estimates for the integrals $\psi w_\pm T_{\mathbb{C}^\mp}(k;x)\chi_{\{x>0\}}$ and $\psi w_\pm T_{\mathbb{C}^\pm}(k;x)\chi_{\{x<0\}}$, we need to further decompose the unit domain and its external region. The unit domain is split into two bounded domains
\begin{equation}\label{dba4}
E_1^\pm=\{k:|k|\leq1, {\rm Im}k\gtrless0\},
\end{equation}
and the counterparts of the split external region are mapped into $E_1^\mp$. So the norm of $\phi\in\mathbf{L}^{p,\nu}(\mathbb{C}^\pm)$ can be defined by two norms on bounded domains in the following form
\begin{equation}\label{dba5}
\|\phi\|_{\mathbf{L}^{p,\nu}(\mathbb{C}^\pm)}=\|\phi\|_{\mathbf{L}^p(E_1^\pm)}+\|\phi_{(\nu)}\|_{\mathbf{L}^p(E_1^\mp)}.
\end{equation}
It is verified that the space $L^{p,\nu}(\mathbb{C})$ is a Banach space. We note that $L^{p,\frac{4}{p}}(\mathbb{C})=L^p(\mathbb{C})$. It is known that $C^\alpha(\bar{G})\subset L^p(\bar{G})$ and $L^p(\bar{G})\subset L^q(\bar{G}), (p>q\geq1)$ are not valid for the infinite domain $G$. However, we can get that $L^{p,\lambda}(\mathbb{C})\subset L^p(\mathbb{C})\subset L^{p,\mu}(\mathbb{C})$ for $p\mu\leq4\leq p\lambda$, and
$C^\alpha(\mathbb{C}) \subset L^{p,\nu}(\mathbb{C}), 0<\alpha<1, 2<p\nu$.

In section 2, we present the properties of the integral on bounded domain. In section 3, we develop a decomposition technique, and define a new integral operator on $k\in\mathbb{C}$ with parameter $x\in\mathbb{R}$ relevant to the AKNS spectral problem. The new integral operator can be decomposed into several operators on $\mathbb{C}^\pm$ and $x\gtrless0$, and further be decomposed into $E_1^\pm$ and $x\gtrless0$. Using the properties on the bounded domain and the boundedness of the exponential factors, we discussed the properties for the new and decomposed operators, and find the exist conditions of the inverse operator $(I-RT_{\mathbb{C}})^{-1}(k;x)$. In section 4, we extend the Dbar dressing method based on the existence of the inverse operator $(I-RT_{\mathbb{C}})^{-1}(k;x)$, and give the AKNS potential reconstruction. Then we show that the maps $L^{q,2}(\mathbb{C})\ni r_\pm(k) \to u(x),v(x)\in L^2(\mathbb{R})$ are Lipschitz continuous. In section 5, we give some conclusions and discussions.

\setcounter{equation}{0}
\section{Preliminary for the integral operator to the Dbar problem on bounded domain}

Let $G$ be a bounded domain with boundary $\Gamma$, and $\bar{G}=G\cup\Gamma$. Suppose the functions $\phi(k)\in C(\bar{G})\cap C^1(G), f(k)\in C(\bar{G})$ admitting the Dbar problem
\begin{equation}\label{dbab1}
\bar\partial \phi(k)=\frac{\partial\phi(k)}{\partial\bar{k}}=f(k),
\end{equation}
then the Pompeiu formula implies that
\begin{equation}\label{dbab2}
\phi(k)=\Phi(k)+f\hat{T}_G(k),
\end{equation}
where
\[\Phi(k)=\frac{1}{2\pi i}\int_{\Gamma}\frac{\phi(z)}{z-k}\mathrm{d}z,\]
\begin{equation}\label{bdab3}
 f\hat{T}_G(k)=\frac{1}{2\pi i}\iint_G\frac{f(z)}{z-k}\mathrm{d}z\wedge \mathrm{d}\bar{z}.
\end{equation}

\begin{theorem}\cite{INV1962}\label{dbth1}
Let $G$ be a bounded domain, and $f(k)\in L^1(\bar{G})$, then the integral
\begin{equation}\label{dbb12}
 f\hat{T}_G(k)=\frac{1}{2\pi i}\iint_G\frac{f(z)}{z-k}\mathrm{d}z\wedge \mathrm{d}\bar{z}
\end{equation}
exists for all points k outside $\bar{G}$, $f\hat{T}_G(k)$ is holomorphic outside $\bar{G}$, and vanish at $k=\infty$.
\end{theorem}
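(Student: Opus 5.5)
The plan is to treat $f\hat{T}_G(k)$ as a parameter-dependent Lebesgue integral and use that, for $k$ outside $\bar G$, the kernel $1/(z-k)$ is bounded on $G$ and analytic in $k$. Fix $k_0\notin\bar G$ and set $d=\operatorname{dist}(k_0,\bar G)>0$. Since $\bar G$ is compact, $d$ is positive. For every $k$ with $|k-k_0|\le d/2$ and every $z\in G$ we have $|z-k|\ge d/2$. Also $\mathrm{d}z\wedge\mathrm{d}\bar z=-2i\,\mathrm{d}A(z)$, so
\[
\Bigl|\frac{f(z)}{z-k}\Bigr|\le \frac{2}{d}|f(z)|\in L^1(G),
\]
and the integral exists absolutely for every such $k$. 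In particular it exists at every point outside $\bar G$.

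For holomorphy we have two equivalent options.

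\emph{Difference quotients.} Write
\[
\frac{f\hat{T}_G(k)-f\hat{T}_G(k_0)}{k-k_0}=\frac{1}{2\pi i}\iint_G\frac{f(z)}{(z-k)(z-k_0)}\,\mathrm{d}z\wedge\mathrm{d}\bar z .
\]
The integrand converges pointwise to $f(z)/(z-k_0)^2$ and is dominated by $\frac{2}{d^2}|f(z)|$. Dominated convergence then gives complex differentiability at $k_0$, with derivative $\frac{1}{2\pi i}\iint_G f(z)(z-k_0)^{-2}\,\mathrm{d}z\wedge\mathrm{d}\bar z$.

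\emph{Morera plus Fubini.} Continuity follows from the same domination. For any triangle $\Delta$ in the disc $|k-k_0|<d/2$, Fubini lets us swap the $\oint_{\partial\Delta}$ and the area integral, and $\oint_{\partial\Delta}\frac{\mathrm{d}k}{z-k}=0$ for each $z\in G$.

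Vanishing at infinity comes next. If $G\subset\{|z|\le \rho\}$, then for $|k|\ge 2\rho$ we have $|z-k|\ge |k|/2$. Hence
\[
|f\hat{T}_G(k)|\le \frac{1}{2\pi}\cdot 2\cdot\frac{2}{|k|}\,\|f\|_{L^1(G)}=\frac{2}{\pi|k|}\|f\|_{L^1(G)}\to0 .
\]
So the function has a removable singularity at $\infty$ with value $0$, that is, it is holomorphic at $\infty$ in the Riemann-sphere sense. One can also expand $\frac{1}{z-k}=-\sum_{n\ge0}z^n k^{-n-1}$ uniformly for $|k|\ge2\rho$ and integrate term by term. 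This gives the Laurent series $-\frac{1}{2\pi i}\sum_n k^{-n-1}\iint_G z^n f\,\mathrm{d}z\wedge\mathrm{d}\bar z$, which has no constant term.

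The only point needing care is the domination step: the $L^1$ hypothesis on $f$ combined with the uniform lower bound on $|z-k|$, locally uniform in $k$ away from $\bar G$. Once that is in place, everything reduces to standard dominated convergence, and I do not expect any real obstacle.
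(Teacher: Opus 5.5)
Your proof is correct. The lower bound $|z-k|\ge d/2$, uniform near any $k_0\notin\bar G$, gives both absolute convergence and differentiation under the integral sign, and the estimate $|f\hat{T}_G(k)|\le \frac{2}{\pi|k|}\|f\|_{L^1(G)}$ gives the vanishing at infinity. The paper gives no proof of its own and only quotes the result from \cite{INV1962}, where it is the same standard parameter-integral argument, so your route matches the intended one.
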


\begin{theorem}\cite{INV1962}\label{dbth2}
Let $G$ be a bounded domain, and $f(k)\in L^1(\bar{G})$, then the function
\[ f\hat{T}_G(k)=\frac{1}{2\pi i}\iint_G\frac{f(z)}{z-k}\mathrm{d}z\wedge \mathrm{d}\bar{z}, \eqno\eqref{dbb12}\]
regarded as a function of point $k\in{G}$, exist almost everywhere, and $f\hat{T}_G(k)\in L^p(\bar{G}^*), 1\leq p<2$, where  $G^*$ is an arbitrary bounded domain of the plane. In addition,
\begin{equation}\label{dbkn9}
\bar\partial(f\hat{T}_G(k))=f(k).
\end{equation}
\end{theorem}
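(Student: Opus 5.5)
The plan has three parts: an $L^p$ estimate for the potential $\int_G |z-k|^{-1}|f(z)|\,\mathrm{d}A(z)$, via Fubini/Tonelli and Minkowski's integral inequality; almost-everywhere existence as a byproduct of that estimate; and the identity $\bar\partial(f\hat{T}_G)=f$ in the sense of distributions, via a duality argument built on the Pompeiu formula \eqref{dbab2}.

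\textbf{Step 1 (the $L^p$ bound, which also gives existence a.e.).} Note that $|\mathrm{d}z\wedge\mathrm{d}\bar z| = 2\,\mathrm{d}A(z)$. Let $G^*$ be any bounded domain and $1\le p<2$. Choose $\rho>0$ so large that $|z-k|\le \rho$ for all $z\in \bar G$ and $k\in \bar G^*$. Then
\[
\Big\| |z-\cdot|^{-1}\Big\|_{L^p(G^*)}^p \le \iint_{|w|\le\rho}|w|^{-p}\,\mathrm{d}A(w)=\frac{2\pi\rho^{2-p}}{2-p}=:M_p<\infty,
\]
and this bound is uniform in $z\in \bar G$. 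Minkowski's integral inequality then gives
\[
\Big\| \iint_G \frac{|f(z)|}{|z-\cdot|}\,\mathrm{d}A(z)\Big\|_{L^p(G^*)} \le M_p^{1/p}\,\|f\|_{L^1(G)} .
\]
Hence the positive integral $\iint_G |f(z)|\,|z-k|^{-1}\mathrm{d}A(z)$ is finite for almost every $k\in G^*$. Taking $G^*\supset G$, the integral \eqref{dbb12} converges absolutely for a.e.\ $k\in G$. Its modulus is dominated by $\pi^{-1}$ times this positive integral, so $f\hat{T}_G\in L^p(\bar G^*)$ with $\|f\hat{T}_G\|_{L^p(G^*)}\le \pi^{-1}M_p^{1/p}\|f\|_{L^1(G)}$.

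\textbf{Step 2 (the distributional identity \eqref{dbkn9}).} Fix a test function $\varphi\in C_0^\infty(\mathbb{C})$. We must show that
\[
-\iint (f\hat{T}_G)(k)\,\partial_{\bar k}\varphi(k)\,\mathrm{d}A(k)=\iint_G f\varphi\,\mathrm{d}A .
\]
Step 1 shows that $|f(z)|\,|z-k|^{-1}|\partial_{\bar k}\varphi(k)|$ is integrable on $G\times\operatorname{supp}\varphi$, so Fubini lets us exchange the order of integration. The task then reduces to the scalar identity
\[
\frac{1}{2\pi i}\iint_{\mathbb{C}} \frac{\partial_{\bar k}\varphi(k)}{z-k}\,\mathrm{d}k\wedge\mathrm{d}\bar k=-\varphi(z)\quad\text{(with the sign fixed by the orientation convention).}
\]
This is exactly the Pompeiu formula \eqref{dbab2} applied to $\varphi$ on a large disc. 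The boundary term vanishes because $\varphi$ has compact support, and the roles of $z$ and $k$ are swapped, which produces one sign change from $\frac{1}{z-k}=-\frac{1}{k-z}$. Substituting back yields $\bar\partial(f\hat T_G)=f$ in $\mathcal{D}'(G)$, and in fact on all of $\mathbb{C}$ if $f$ is extended by zero.

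\textbf{Main obstacle.} The analysis is routine; the delicate point is bookkeeping the constants and signs between $\mathrm{d}z\wedge\mathrm{d}\bar z=-2i\,\mathrm{d}A$ and the $\frac{1}{2\pi i}$ normalization, so that \eqref{dbkn9} comes out with sign $+1$. I would check this on $f=\chi_{\{|z|<1\}}$. There a direct computation gives $f\hat T_G(k)=\bar k$ inside the disc, so $\bar\partial(f\hat T_G)=1$, which confirms the sign.

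Finally, the claim is only that \eqref{dbkn9} holds in the generalized (Sobolev) sense, and Step 2 establishes precisely that. If a pointwise statement were wanted, one would additionally approximate $f$ in $L^1$ by smooth functions and use the continuity bound from Step 1, but I would not pursue this.
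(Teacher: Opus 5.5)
Your proof is correct. The paper does not prove this statement; it simply quotes it from Vekua \cite{INV1962}. Your argument is essentially the standard proof found there: a uniform bound on $\||z-\cdot|^{-1}\|_{L^p(G^*)}$ with Minkowski/Tonelli gives the $L^p$ estimate and a.e. existence, and Fubini with the Pompeiu formula on a test function gives $\bar\partial(f\hat{T}_G)=f$ in the generalized (Sobolev) sense. Your sign bookkeeping via $\mathrm{d}z\wedge\mathrm{d}\bar z=-2i\,\mathrm{d}A$ and the check on $\chi_{\{|z|<1\}}$ are both right. The distributional reading of \eqref{dbkn9} is also the appropriate one for general $f\in L^1$.
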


The two theorems tell us that $f\hat{T}_G(k)$ is a function in the $k$ plane even though $f(k)$ is defined on the bounded domain $G$.

\begin{lemma}\label{dble1}
Let $G$ be a bounded domain, and $k_1,k_2\in\mathbb{C}$ are different points. Let
\[I(\mu,\nu)=\iint_G\frac{1}{|z-k_1|^\mu|z-k_2|^\nu}\mathrm{d}\sigma_z, \quad 0<\mu,\nu<2,\]
with $\mathrm{d}\sigma_z=|\frac{1}{2i}\mathrm{d}z\wedge \mathrm{d}\overline{z}|=\mathrm{d}z_R\mathrm{d}z_I, ~ (z=z_R+i z_I)$, then
\begin{equation}\label{dbb1}
 I(\mu,\nu)\leq\left\{\begin{array}{ll}
 M_1(\mu,\nu,G), &\mu+\nu<2;\\
 M_2(\mu,\nu,G)+8\pi|\ln|k_1-k_2||, &\mu+\nu=2;\\
 M_3(\mu,\nu)|k_1-k_2|^{2-\mu-\nu}, &\mu+\nu>2,
 \end{array}\right.
\end{equation}
where $ M_1(\mu,\nu,G), M_2(\mu,\nu,G)$ are positive constants depending on $\mu,\nu$ and $G$, and $ M_3(\mu,\nu)$ is a positive constant on $\mu,\nu$.
\end{lemma}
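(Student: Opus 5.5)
Set $\delta=|k_1-k_2|$ and split the plane into three regions: $D_1=\{z:|z-k_1|<\delta/2\}$, $D_2=\{z:|z-k_2|<\delta/2\}$, and the remainder $D_3=\mathbb{C}\setminus(D_1\cup D_2)$. We bound the integral over $G\cap D_j$ for each $j$; only on $D_3\cap G$ will the boundedness of $G$ actually be needed. Polar coordinates centred at $k_1$ or $k_2$ do most of the work.

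On $D_1$ we have $|z-k_2|\geq \delta/2$. Hence the contribution is at most $(\delta/2)^{-\nu}\int_0^{\delta/2}2\pi\rho^{1-\mu}\,d\rho=\frac{2\pi}{2-\mu}(\delta/2)^{2-\mu-\nu}$, and $D_2$ is symmetric. On $D_3$ the triangle inequality gives $|z-k_2|\leq |z-k_1|+\delta\leq 3|z-k_1|$ and likewise $|z-k_1|\leq 3|z-k_2|$, so the two distances are comparable. The integrand is therefore bounded by $C\,|z-k_1|^{-(\mu+\nu)}$ with $C$ depending only on $\mu,\nu$, and it remains to estimate $\int_{D_3\cap G}|z-k_1|^{-s}\,d\sigma_z$ with $s=\mu+\nu$.

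For $s>2$, extend the integral to all of $\{|z-k_1|\geq\delta/2\}$. This gives $2\pi\int_{\delta/2}^\infty\rho^{1-s}\,d\rho=\frac{2\pi}{s-2}(\delta/2)^{2-s}$, which is independent of $G$. Adding the $D_1,D_2$ pieces yields $M_3(\mu,\nu)\delta^{2-\mu-\nu}$.

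For $s\leq 2$, boundedness of $G$ is essential. If $G$ misses a neighbourhood of $k_1,k_2$ or they are far away, everything is trivially bounded, but we need uniformity in $k_1,k_2$. There is a subtlety here: if $k_1,k_2$ are far from $G$, the integral is small, so the issue is only when they are near $G$. Let $d=\operatorname{diam}G$. Whenever $G\cap D_3\neq\emptyset$, that part of $G$ lies inside $\{|z-k_1|\leq \rho_0\}$ with $\rho_0=\operatorname{dist}(k_1,G)+d$. To avoid dependence on $k_1$, instead use $|z-k_1|^{-s}\leq$ the radially decreasing rearrangement bound: the integral of a radially decreasing function of $|z-k_1|$ over a set of area $|G|$ is maximised by a disc of the same area centred at $k_1$, of radius $R_G=\sqrt{|G|/\pi}$, intersected with $\rho\geq\delta/2$.

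For $s<2$ this gives at most $\frac{2\pi}{2-s}R_G^{2-s}$, and the $D_1,D_2$ terms are bounded because $\delta$ may be assumed bounded. When $\delta$ is large compared with $d$, $G$ cannot meet both $D_1$ and $D_2$ closely; in that case, as for $D_3$, compare with the rearrangement bound, using $|z-k_2|^{-\nu}\leq(\delta/2)^{-\nu}$ to absorb everything. This gives $M_1$.

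For $s=2$ the rearrangement gives $2\pi\int_{\delta/2}^{\max(R_G,\delta/2)}\rho^{-1}\,d\rho\leq 2\pi(\ln R_G+|\ln\delta|+\ln 2)$. The factor $3^{s}$ from the comparability step may need adjusting to reach exactly the stated coefficient $8\pi$, for instance by splitting more carefully with $|z-k_2|\geq|z-k_1|/2$ when $|z-k_1|\geq 2\delta$ and treating the annulus $\delta/2\leq|z-k_1|\leq2\delta$ separately, where the annulus contributes a constant. The $D_1,D_2$ terms are $O(1)$. This yields $M_2+8\pi|\ln\delta|$.

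The main obstacle is producing constants depending only on $G$ (not on the locations of $k_1,k_2$) in the cases $s\leq2$, together with the exact constant $8\pi$. The rearrangement argument with radius $R_G$ handles the first; the refined annulus splitting handles the second.
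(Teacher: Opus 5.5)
Your proof is correct but takes a different route from the paper. The paper uses a single disc of radius $2|k_1-k_2|$ about $k_1$. Outside that disc it uses $|z-k_1|\le 2|z-k_2|$ and a polar integral about $k_1$, out to the radius $2\beta_0$ of a disc centred at $k_1$ that contains $G$. Inside the disc, the rescaling $z-k_1=\omega(k_1-k_2)$ reduces everything to $|k_1-k_2|^{2-\mu-\nu}$ times the fixed integral $\iint_{|\omega|\le 2}|\omega|^{-\mu}|\omega+1|^{-\nu}\,d\sigma_\omega$. You instead excise discs of radius $\delta/2$ about both points, and for $\mu+\nu\le 2$ you replace the enclosing disc by a rearrangement (bathtub) bound.

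Your route gains two things.
\begin{itemize}
\item Your $M_1,M_2$ depend on $G$ only through its area and are uniform in $k_1,k_2$. The paper's $\beta_0$ depends on where $k_1$ lies relative to $G$, and its $M_1$ even contains the factor $|k_1-k_2|^{2-\mu-\nu}$.
\item Each singularity stays a point singularity handled in polar coordinates. The paper's estimate $|\omega+1|\ge 1-|\omega|$ smears the singularity at $\omega=-1$ over the circle $|\omega|=1$ and yields $\int_0^1 w^{1-\mu}(1-w)^{-\nu}\,dw$, which is finite only for $\nu<1$. For $1\le\nu<2$ the paper therefore has to fall back on an exact hypergeometric evaluation.
\end{itemize}
The paper's route gains economy: one comparability constant $2^\nu$ and one radial integral give the coefficient $\pi 2^{1+\nu}<8\pi$ directly. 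Your proposed refinement for the logarithmic case splits at $|z-k_1|=2\delta$, with the annulus $\delta/2\le|z-k_1|\le 2\delta$ outside $D_2$ contributing at most $16\pi$. That refinement is exactly the paper's comparability step.

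Some small corrections are needed, none of which affects the structure.
\begin{itemize}
\item For $\mu+\nu=2$, the extremal set of area $|G|$ inside $\{|z-k_1|\ge r_0\}$ is the annulus $r_0\le|z-k_1|\le\sqrt{R_G^2+r_0^2}$, not the disc of radius $\max(R_G,r_0)$. The resulting bound is $\pi\ln(1+R_G^2/r_0^2)$.
\item Your displayed bound $2\pi(\ln R_G+|\ln\delta|+\ln 2)$ can be negative when $R_G$ is small. The $G$-dependent constant should be written with $|\ln R_G|$ plus an absolute constant. This and the previous point change only additive constants.
\item In the case $\mu+\nu<2$ you dropped the factor $3^\nu$ from the comparability step.
\item The case $\mu+\nu<2$ can be done in one line, with no discussion of large $\delta$. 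Use the weighted AM--GM inequality $a^{-\mu}b^{-\nu}\le\frac{\mu}{s}a^{-s}+\frac{\nu}{s}b^{-s}$ together with $\iint_G|z-k_j|^{-s}\,d\sigma_z\le\frac{2\pi}{2-s}R_G^{2-s}$.
\end{itemize}
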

{\bf Proof}. Taking $k_1$ as the center, draw a circle $G_1$ with radius $\beta=2|k_1-k_2|$, and then draw a concentric circle $G_0$ with radius $2\beta_0$, such that $G\subset G_0$. In the annulus $G_0-G_1$, we have
\[|z-k_1|\leq|z-k_2|+|k_2-k_1|\leq2|z-k_2|.\]
We note that there are three cases on the position about the points $k_1,k_2$ and the domain $G$:
\begin{enumerate}
\item[(i).] $k_1,k_2$ locate outside of the domain $G$;
\item[(ii).] $k_1,k_2$ locate in the domain $G$;
\item[(iii).] $k_1$ outside of $G$, and $k_2$ in $G$.
\end{enumerate}
\begin{figure}[htp]
  \centering
  \includegraphics[width=4cm]{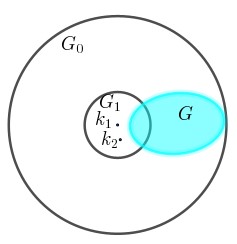}\quad
  \includegraphics[width=4cm]{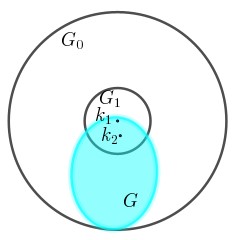}\quad
  \includegraphics[width=4cm]{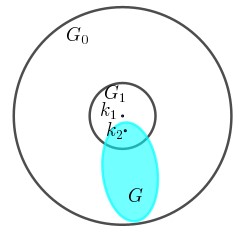}
  \caption{Case i. (left); case ii.(middle);case iii. (right).}\label{Fig1}
\end{figure}

We consider the integral
\begin{equation}\label{dbb2}
\begin{aligned}
I_0&:=\iint_{(G_0-G_1)\cap G}\frac{1}{|z-k_1|^\mu|z-k_2|^\nu}\mathrm{d}\sigma_z\\
&\leq\iint_{(G_0-G_1)\cap G}\frac{1}{|z-k_1|^\mu(2^{-1}|z-k_1|)^\nu}\mathrm{d}\sigma_z\\
&\leq2^\nu(\theta_1-\theta_0)\int_\beta^{2\beta_0}r^{1-\mu-\nu}\mathrm{d}r,
\end{aligned}
\end{equation}
where $\theta_1 - \theta_0$ is the central angle (measured counterclockwise) subtended by the region $(G_0 - G_1) \cap G$ with respect to $k_1$. It is noted that $0<\theta_1-\theta_0<2\pi$ for the cases (i) and (iii), and  $0\leq\theta_1-\theta_0\leq2\pi$ for the case (ii).

If $\mu+\nu<2$, then \eqref{dbb2} implies that
\begin{equation}\label{dba7}
I_0\leq \frac{\pi 2^{1+\nu}}{2-\mu-\nu}((2\beta_0)^{2-\mu-\nu}-\beta^{2-\mu-\nu})\leq \frac{\pi 2^{3-\mu}}{2-\mu-\nu}\beta_0^{2-\mu-\nu}.
\end{equation}
If $\mu+\nu=2$, then we have from \eqref{dbb2} that
\begin{equation}\label{dba8}
I_0\leq\pi2^{1+\nu}\ln r|_\beta^{2\beta_0}\leq\pi2^{1+\nu}(|\ln\beta_0|+|\ln|k_1-k_2||).
\end{equation}
And for $\mu+\nu>2$, we have
\begin{equation}\label{dba9}
I_0\leq\frac{\pi2^{1+\nu}}{\mu+\nu-2}(2|k_1-k_2|)^{2-\mu-\nu}=\frac{\pi2^{3-\mu}}{\mu+\nu-2}|k_1-k_2|^{2-\mu-\nu}.
\end{equation}

Next, we consider the integral
\[I_1:=\iint_{G_1\cap G}\frac{1}{|z-k_1|^\mu|z-k_2|^\nu}\mathrm{d}\sigma_z,\]
and let
\[d_0=\inf\left\{\frac{|k_1-k|}{|k_1-k_2|}:\forall k\in G_1\cap G\right\}.\]
Thus $0<d_0<2$ for the cases (i) and (iii), $0\leq d_0\leq2$ for the case (ii).

For $z\in G_1\cap G$, we let
\[z-k_1=\omega(k_1-k_2), \quad |\omega|=w, \quad r=|z-k_1|=w|k_1-k_2|,\]
then $d_0\leq w\leq2$, and
\[z-k_2=(\omega+1)(k_1-k_2), \quad |z-k_2|=|\omega+1||k_1-k_2|,\]
\[\mathrm{d}\sigma_z=r\mathrm{d}r\mathrm{d}\theta=|k_1-k_2|^2w\mathrm{d}w\mathrm{d}\theta=|k_1-k_2|^2\mathrm{d}\sigma_\omega.\]
Thus we find that
\begin{equation}\label{dbb3}
\begin{aligned}
I_1&\leq |k_1-k_2|^{2-\mu-\nu}\iint_{d_0\leq w\leq2}w^{-\mu}|\omega+1|^{-\nu}\mathrm{d}\sigma_\omega\\
&\leq M(\mu,\nu)|k_1-k_2|^{2-\mu-\nu}.
\end{aligned}
\end{equation}
Here the integral
\[\begin{aligned}
&\big|\iint_{0\leq w\leq2}w^{-\mu}|\omega+1|^{-\nu}\mathrm{d}\sigma_\omega\big|\\
&\leq\big|\iint_{0\leq w\leq1}w^{-\mu}|\omega+1|^{-\nu}\mathrm{d}\sigma_\omega\big|+
\big|\iint_{1\leq w\leq2}w^{-\mu}|\omega+1|^{-\nu}\mathrm{d}\sigma_\omega\big|\\
&\leq2\pi\int_0^1w^{1-\mu}(1-w)^{-\nu}\mathrm{d}w+\big|\iint_{1\leq w\leq2}w^{-\mu}|\omega+1|^{-\nu}\mathrm{d}\sigma_\omega\big|,
\end{aligned}\]
and the right integral is bounded for $0<\mu<2, 0<\nu<2$ by virtue of the theories of the Euler integral and the hypergeometric function.

Since $I(\mu,\nu)=I_0+I_1$, with
\[\begin{aligned}
M_1(\mu,\nu,G)&=M(\mu,\nu)|k_1-k_2|^{2-\mu-\nu}+\frac{\pi 2^{3-\mu}}{2-\mu-\nu}\beta_0^{2-\mu-\nu};\\
M_2(\mu,\nu,G)&=M(\mu,\nu)|k_1-k_2|^{2-\mu-\nu}+\pi2^{1+\nu}|\ln\beta_0|;\\
M_3(\mu,\nu)&=M(\mu,\nu)+\frac{\pi2^{3-\mu}}{\mu+\nu-2}.
\end{aligned}\]
then the Lemma is proved. \qquad $\square$

\begin{theorem}\label{dbth3}
Let $G$ be a bounded domain and $f(k)\in L^p(\bar{G}), p>2$, then the function $g(k)=fT_G(k)$ satisfying the conditions
\begin{equation}\label{dbb4}
|g(k)|\leq M_4(p,G)\|f\|_{L^p(\bar{G})}, \quad k\in\mathbb{C},
\end{equation}
\begin{equation}\label{dbb5}
 |g(k_1)-g(k_2)|\leq M_5(p)\|f\|_{L^p(\bar{G})}|k_1-k_2|^\gamma,
\end{equation}
where $\gamma=\frac{p-2}{p}, k_1,k_2\in\mathbb{C}$. Here the positive constant $M_4(p,G)$ depends on $p$ and $G$, the positive constant $M_5(p)$ depends on $p$.
\end{theorem}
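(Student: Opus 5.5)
The plan is to apply Hölder's inequality directly to the kernel, with the constants supplied by Lemma \ref{dble1}. Let $q=\frac{p}{p-1}$ be the conjugate exponent. Since $p>2$, we have $1<q<2$. Because $|\mathrm{d}z\wedge\mathrm{d}\bar z|=2\,\mathrm{d}\sigma_z$, the normalizing constant $\frac{1}{2\pi i}$ only contributes the factor $\frac{1}{\pi}$. For the sup bound \eqref{dbb4} we write
\[
|g(k)|\leq\frac{1}{\pi}\iint_G\frac{|f(z)|}{|z-k|}\mathrm{d}\sigma_z\leq\frac{1}{\pi}\|f\|_{L^p(\bar G)}\Big(\iint_G\frac{\mathrm{d}\sigma_z}{|z-k|^{q}}\Big)^{1/q}.
\]
The last integral is bounded uniformly in $k\in\mathbb{C}$. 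Indeed, enlarging $G$ to the disc $D(k,\rho)$ of the same area $|G|$ (the rearrangement trick) gives the bound $\frac{2\pi}{2-q}\rho^{2-q}$ with $\pi\rho^2=|G|$. Alternatively, one can take $G\subset$ a disc of radius $\beta_0$ and treat $k$ near $G$ and $k$ far from $G$ separately. Either way we obtain $M_4(p,G)$.

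For the Hölder estimate \eqref{dbb5}, fix $k_1\neq k_2$ and use the resolvent identity
\[
\frac{1}{z-k_1}-\frac{1}{z-k_2}=\frac{k_1-k_2}{(z-k_1)(z-k_2)}.
\]
This gives
\[
|g(k_1)-g(k_2)|\leq\frac{|k_1-k_2|}{\pi}\|f\|_{L^p(\bar G)}\Big(\iint_G\frac{\mathrm{d}\sigma_z}{|z-k_1|^{q}|z-k_2|^{q}}\Big)^{1/q}.
\]
Now apply Lemma \ref{dble1} with $\mu=\nu=q\in(1,2)$. Then $\mu+\nu=2q>2$, so the third case applies and the integral is at most $M_3(q,q)|k_1-k_2|^{2-2q}$. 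Raising this to the power $1/q$ gives $|k_1-k_2|^{2/q-2}$. Multiplying by $|k_1-k_2|$ gives the exponent $\frac{2}{q}-1=\frac{2(p-1)}{p}-1=\frac{p-2}{p}=\gamma$. The constant $M_5(p)=\frac{1}{\pi}M_3(q,q)^{1/q}$ depends only on $p$, as the statement requires. The key point is that $M_3$ in Lemma \ref{dble1} is independent of $G$, because the estimate there extends over the whole plane outside a disc of radius comparable to $|k_1-k_2|$.

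The main obstacle is making sure the constants have the claimed dependence. In \eqref{dbb4}, the constant must be uniform for all $k\in\mathbb{C}$, including $k\in G$ and $k$ far away; the rearrangement argument handles both at once. In \eqref{dbb5}, no dependence on $G$ may appear. I would double-check this by bounding the integral over all of $\mathbb{C}$ instead of over $G$. This is legitimate when $2q>2$ and $q<2$, since the integrand decays like $|z|^{-2q}$ at infinity. After rescaling $z-k_1=\omega(k_1-k_2)$, the integral becomes exactly $|k_1-k_2|^{2-2q}\iint_{\mathbb{C}}|\omega|^{-q}|\omega+1|^{-q}\mathrm{d}\sigma_\omega$, and this last integral is a finite constant depending only on $q$.
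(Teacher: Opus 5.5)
Your proposal is correct and follows the paper's proof. Both use H\"older with the conjugate exponent $q=\frac{p}{p-1}\in(1,2)$ and a bound on $\iint_G|z-k|^{-q}\,\mathrm{d}\sigma_z$ that is uniform in $k$; for the H\"older estimate both use the resolvent identity and Lemma~\ref{dble1} with $\mu=\nu=q$, giving exponent $\frac{2}{q}-1=\gamma$. The differences are minor. You bound $\iint_G|z-k|^{-q}\,\mathrm{d}\sigma_z$ by rearranging onto the equal-area disc centered at $k$, which handles $k\in\bar G$ and $k\notin\bar G$ together, while the paper treats the two cases separately in polar coordinates. Your whole-plane rescaling also confirms directly that $M_5$ does not depend on $G$.
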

{\bf Proof}. To prove the inequality \eqref{dbb4}, making use of the H\"older inequality, we find that
\begin{equation}\label{dbb6}
\begin{aligned}
|g(k)|&\leq\frac{1}{\pi}\iint_G\frac{|f(z)|}{|z-k|}d\sigma_z\\
&\leq\frac{1}{\pi}\left(\iint_G|f(z)|^pd\sigma_z\right)^{\frac{1}{p}}\left(\iint_G|z-k|^{-q}d\sigma_z\right)^{\frac{1}{q}},
\end{aligned}
\end{equation}
where $q=\frac{p}{p-1}$. For $k\in\bar{G}$, we let $z-k=re^{i\theta}$, and $d$ be the redius of the domain $G$, then
\begin{equation}\label{dbb7}
\begin{aligned}
\left(\iint_G|z-k|^{-q}\mathrm{d}\sigma_z\right)^{\frac{1}{q}}&\leq\left(\int_0^{2\pi}\int_0^dr^{1-q}\mathrm{d}r\mathrm{d}\theta\right)^{\frac{1}{q}}\\
&=\big(\frac{2\pi}{2-q}\big)^{\frac{1}{q}}d^{\frac{2-q}{q}}=(\frac{2\pi}{2-q})^{\frac{1}{q}}d^\gamma,
\end{aligned}
\end{equation}
where the identity $\gamma=\frac{p-2}{p}=\frac{2-q}{q}$ has been used.
For $k\notin\bar{G}$, letting $d_0$ be the distance from $k$ to the domain $G$, then we have
\begin{equation}\label{dbb8}
\begin{aligned}
\left(\iint_G|z-k|^{-q}\mathrm{d}\sigma_z\right)^{\frac{1}{q}}&\leq\left(\int_0^{2\pi}\int_{d_0}^{d_0+d}r^{1-q}\mathrm{d}r\mathrm{d}\theta\right)^{\frac{1}{q}}\\
&=\left[\frac{2\pi}{2-q}\big((d_0+d)^{2-q}-d_0^{2-q}\big)\right]^{\frac{1}{q}}\\
&\leq\left[\frac{2\pi}{2-q}\bigg((1+\frac{d_0}{d})^{2-q}-(\frac{d_0}{d})^{2-q}\bigg)\right]^{\frac{1}{q}} d^\gamma.
\end{aligned}
\end{equation}
Since $2-q>0$ and
\[(1+\frac{d_0}{d})^{2-q}-(\frac{d_0}{d})^{2-q}\to\left\{\begin{array}{cc}
1&d_0\to0,\\
0&d_0\to\infty,
\end{array}\right.\]
then the left side of \eqref{dbb8} is bounded.
Thus the equality \eqref{dbb4} is obtained from \eqref{dbb6},\eqref{dbb7} and \eqref{dbb8}.

Next we prove the condition \eqref{dbb5}. For $p>2, q=\frac{p}{p-1}$, then $1<q<2$. By virtue of the H\"older inequality and the Lemma \ref{dble1}, we obtain that
\[\begin{aligned}
|g(k_1)-g(k_2)|&\leq\frac{|k_1-k_2|}{\pi}\iint_G\frac{|f(z)|}{|z-k_1||z-k_2|}\mathrm{d}\sigma_z\\
&\leq\frac{|k_1-k_2|}{\pi}\|f\|_{L^p(\bar{G})}\left(\iint_G(|z-k_1||z-k_2|)^{-q}\mathrm{d}\sigma_z\right)^{\frac{1}{q}}\\
&\leq\frac{|k_1-k_2|}{\pi}\|f\|_{L^p(\bar{G})}\left(M_3(q,q)|k_1-k_2|^{2-2q}\right)^{\frac{1}{q}}\\
&\leq M_5(p)\|f\|_{L^p(\bar{G})}|k_1-k_2|^\gamma.
\end{aligned}\]
The theorem is proved. \qquad $\square$

It is remarked that, from the conditions \eqref{dbb4} and \eqref{dbb5}, $\hat{T}_G$ is a linear completely continuous operator in the space $L^p(\bar{G})$, and
\[\hat{T}_G: L^p(\bar{G})\to C^\gamma(\mathbb{C}), \quad 0<\gamma<1, \gamma=\frac{p-2}{p}, p>2.\]
Here
\[C^\gamma(\mathbb{C})=\{f(k): f(k),f(k^{-1})\in C^\gamma(E_1)\}.\]

\setcounter{equation}{0}
\section{Dbar problem relevant to the AKNS spectral problem}
In this section, we will develop a decomposition technique for the Dbar problem and extend the above properties to consider the AKNS spectral problem. For convenience, the $k$ plane will be called spectral space, and $x,t$ be physical variables. For Lax integrable system with linear spectral problem $\phi_x=U\phi$, the potential matrix $U=U(x;k)$ can be regarded as an important characteristic to certain nonlinear integrable PDE.

It is noted that the Dbar problem
\begin{equation}\label{dbc1}
\bar\partial\psi(k)=\psi(k)R(k)
\end{equation}
can be applied to investigate the integrable system, the spectral transformation matrix or the distribution $R(k)$ will take the same role as the potential matrix $U$ to character the certain nonlinear integrable PDE. It is known that the Dbar equation \eqref{dbc1} with normalization condition $\psi(k)\to I$ as $k \to\infty$ is equivalent to the integral equation
\begin{equation}\label{dbc2}
 \psi(k)=I+\psi R{T}_\mathbb{C}(k),
\end{equation}
where the operator ${T}_\mathbb{C}$ acting to the left is defined in the following form
\begin{equation}\label{dbc3}
 \psi R{T}_\mathbb{C}(k)=\frac{1}{2i\pi}\iint_{\mathbb{C}}\frac{\psi(z)R(z)}{z-k}\mathrm{d}z\wedge \mathrm{d}\bar{z}.
 \end{equation}
Here an assumption should be given that the integral is convergent for certain $R$ and $\psi$.

To discuss the AKNS spectral problem by the Dbar problem, one need to introduce the physical variable $x$ into $R(k)$ as following
\begin{equation}\label{gaa5}
\partial_x R=-ik[\sigma_3,R], \quad \sigma_3={\rm diag}(1,-1),
\end{equation}
which implies that
\begin{equation}\label{gaa6}
R(k;x)=\mathrm{e}^{-ikx\hat\sigma_3}r(k)=\left(\begin{matrix}
0&r_+(k)\mathrm{e}^{-2ikx}\\
r_-(k)\mathrm{e}^{2ikx}&0
\end{matrix}\right),
\end{equation}
where the off-matrix function $r(k)$ or $r_\pm(k)$ are independent of $x$, $[\sigma_3,A]=\sigma_3A-A\sigma_3$ and $\mathrm{e}^{\alpha\hat\sigma_3}A=\mathrm{e}^{\alpha\sigma_3}A\mathrm{e}^{-\alpha\sigma_3}$ for some $2\times2$ matrix $A$.
Thus the associated eigenvalue function $\psi$ will dependent on the physical variable $x$, that is $\psi=\psi(k;x)$. It is emphasized that, substituting $R(k;x)$ into the integral equation \eqref{dbc2} directly, it may lead to uncertainty.
Since the exponentials $\mathrm{e}^{\pm 2ikx}$ with $x\in\mathbb{R}$ and $k\in\mathbb{C}$ are not bounded for $x{\rm Im}k\lessgtr0$,
the function $\psi(\cdot,x)R(\cdot,x)\hat{T}_\mathbb{C}(k)$ may not convergent in view of the integral \eqref{dbc2}.

To control the spectral transformation matrix $R(k;x)$ in \eqref{gaa6}, we split it into two nilpotent matrices $w_\pm(k;x)$, that is $R=w_-+w_+$, where
\begin{equation}\label{dbc4}
w_-=\left(\begin{matrix}
0&0\\
r_-(k)\mathrm{e}^{2ikx}&0
\end{matrix}\right), \quad w_+=\left(\begin{matrix}
0&r_+(k)\mathrm{e}^{-2ikx}\\
0&0
\end{matrix}\right).
\end{equation}
Then we define a new integral operator $RT_\mathbb{C}(k;x)$ by giving the following decomposition  \begin{equation}\label{dbc5}
\begin{aligned}
 \psi RT_\mathbb{C}(k;x):=&[\psi w_-T_{\mathbb{C}^+}(k;x)+\psi w_+T_{\mathbb{C}^-}(k;x)]\chi_{\{x>0\}}\\
 &+[\psi w_-T_{\mathbb{C}^-}(k;x)+\psi w_+T_{\mathbb{C}^+}(k;x)]\chi_{\{x<0\}},
\end{aligned}
\end{equation}
where $\chi$ is the indicator function of the half lines $\{x\gtrless0\}$. Here
\begin{equation}\label{dbknb8}
\begin{aligned}
\psi w^-T_{\mathbb{C}^\pm}(k;x)=\frac{1}{2\pi i}\iint_{\mathbb{C}^\pm}\frac{\psi(z;x)w^-(z;x)}{z-k}\mathrm{d}z\wedge \mathrm{d}\bar{z},\\
\psi w^+T_{\mathbb{C}^\pm}(k;x)=\frac{1}{2\pi i}\iint_{\mathbb{C}^\pm}\frac{\psi(z;x)w^+(z;x)}{z-k}\mathrm{d}z\wedge \mathrm{d}\bar{z}.
\end{aligned}
\end{equation}

\begin{lemma}\label{dble2}
Let $\psi\in {L}_x^\infty(\mathbb{R}^+,\mathbf{L}_k^{p,0}(\mathbb{C}^\pm))$, $r_\mp \in L_k^{q,2}(\mathbb{C}^\pm)$, and
introduce matrix functions
\begin{equation}\label{dbc6}
g^\pm(k;x)=\psi w_\mp T_{\mathbb{C}^\pm}(k;x)\chi_{\{x>0\}},
\end{equation}
then we have the following conditions
\begin{equation}\label{dbc7}
\|g^\pm(k;x)\|\leq M_{\pm}^{p,q}\|r_\mp\|_{L^{q,2}(\mathbb{C}^\pm)}\sup\limits_{x>0}\|\psi\|_{\mathbf{L}_k^{p,0}(\mathbb{C}^\pm)},
\end{equation}
\begin{equation}\label{dbc8}
 \|g^{\pm}(k_1)-g^{\pm}(k_2)\|\leq \tilde{M}_\pm^{p,q}\|r_\mp\|_{L^{q,2}(\mathbb{C}^\pm)}\sup\limits_{x>0}\|\psi\|_{\mathbf{L}_k^{p,0}(\mathbb{C}^\pm)}\|k_1-k_2|^\alpha,
\end{equation}
where $k_1,k_2\in\mathbb{C}$, $\frac{1}{p}+\frac{1}{q}<\frac{1}{2}$, and $\alpha=1-2\big(\frac{1}{p}+\frac{1}{q}\big)$. 
Here the positive constants $M_{\pm}^{p,q},\tilde{M}_\pm^{p,q}$ depend on $p$ and $q$.
\end{lemma}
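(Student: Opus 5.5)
The plan is to reduce everything to Theorem \ref{dbth3} on the two bounded half-discs $E_1^\pm$, using the fact that the exponential in the integrand is bounded by one. Take $g^+$ for definiteness; $g^-$ is identical after the reflection $k\mapsto\bar k$. The only nonzero column of $\psi w_-$ is $\psi_{\cdot 2}\,r_-(z)\mathrm{e}^{2izx}$. For $x>0$ and ${\rm Im}z>0$ we have $|\mathrm{e}^{2izx}|=\mathrm{e}^{-2x{\rm Im}z}\le1$. Hence, pointwise in $x>0$,
\[
\|\psi(z;x)w_-(z;x)\|\le \|\psi(z;x)\|\,|r_-(z)|, \qquad z\in\mathbb{C}^+ ,
\]
and the dependence on $x$ disappears after taking $\sup_{x>0}$. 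I split $\mathbb{C}^+=E_1^+\cup(\mathbb{C}^+\setminus E_1^+)$ and write $g^+=g_1^++g_2^+$ accordingly.

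For $g_1^+$ I use the H\"older inequality with $\frac1s=\frac1p+\frac1q<\frac12$, so $s>2$:
\[
\|\psi w_-\|_{L^s(E_1^+)}\le\|\psi\|_{\mathbf{L}^p(E_1^+)}\|r_-\|_{L^q(E_1^+)} .
\]
Theorem \ref{dbth3} with exponent $s$ then gives the uniform bound \eqref{dbb4} and the H\"older bound \eqref{dbb5} with exponent $\gamma=1-\frac2s=1-2(\frac1p+\frac1q)=\alpha$, for all $k\in\mathbb{C}$.

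For $g_2^+$ I change variables $z=\zeta^{-1}$, which maps $\{|z|\ge1,{\rm Im}z>0\}$ onto $E_1^-$. Here $\mathrm{d}\sigma_z=|\zeta|^{-4}\mathrm{d}\sigma_\zeta$ and
\[
\frac{1}{z-k}=\frac{\zeta}{1-k\zeta}.
\]
With $\psi_{(0)}(\zeta)=\psi(\zeta^{-1})$ and $r_{-(2)}(\zeta)=|\zeta|^{-2}r_-(\zeta^{-1})$, the integrand becomes
\[
\frac{\zeta\,|\zeta|^{-2}}{1-k\zeta}\,\psi_{(0)}(\zeta)\,r_{-(2)}(\zeta) .
\]
The weight $\nu=2$ on $r_-$ is exactly what absorbs the Jacobian $|\zeta|^{-4}$ up to the factor $|\zeta|^{-2}$. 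Using $\zeta^{-1}=\bar\zeta|\zeta|^{-2}$ and the identity
\[
\frac{1}{\zeta(1-k\zeta)}=\frac1\zeta-\frac{1}{\zeta-k^{-1}} ,
\]
the integral equals a Cauchy-type integral of $F(\zeta)=\bar\zeta\,|\zeta|^{-2}\psi_{(0)}r_{-(2)}$ taken at the two points $0$ and $k^{-1}$ (up to constants). Formally this is $F\hat T_{E_1^-}(0)-F\hat T_{E_1^-}(k^{-1})$.

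This step is where I expect the main difficulty. The factor $|\zeta|^{-1}$ in $F$ is not in $L^\infty$. I would try one of two fixes. The first is to keep the form $\zeta|\zeta|^{-2}/(1-k\zeta)$ and estimate directly with H\"older and Lemma \ref{dble1}, since $|\zeta|^{-1}$ lies in $L^t$ for $t<2$ and so only costs a little integrability. The second, which I expect the intended proof uses, is to note that the $\zeta$ in the numerator cancels one power of $|\zeta|^{-2}$, leaving the harmless factor $\bar\zeta/|\zeta|^2\cdot\zeta=1$. The second option works if one regroups as
\[
\frac{1}{z-k}=\frac{\zeta}{1-k\zeta},\qquad \frac{\zeta}{|\zeta|^{2}}=\frac1{\bar\zeta} ,
\]
which still leaves $|\zeta|^{-1}$. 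So I would handle it by the first option: estimate
\[
\iint_{E_1^-}\frac{|F_0(\zeta)|}{|\zeta|\,|\zeta-k^{-1}|}\,\mathrm{d}\sigma_\zeta
\]
by H\"older with $F_0=\psi_{(0)}r_{-(2)}\in L^s$, $s>2$, combined with Lemma \ref{dble1} for $\mu=1$ and $\nu=1$. This yields a bound uniform in $k$.

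For the H\"older continuity of $g_2^+$ I take the difference at $k_1,k_2$, which produces the kernel
\[
\frac{(k_1-k_2)\,\zeta^2}{(1-k_1\zeta)(1-k_2\zeta)} .
\]
The extra power of $\zeta$ now kills the singular factor, and Lemma \ref{dble1} in the case $\mu+\nu>2$ (as in the proof of Theorem \ref{dbth3}) gives $|k_1-k_2|^{\alpha}$, at least for $k_1,k_2$ near the origin. For large $k$, where the points $k_j^{-1}$ approach $0$, I would fall back on the first part of the argument with the roles of the regions exchanged, and patch the two ranges together by the triangle inequality. Summing the bounds for $g_1^+$ and $g_2^+$ and taking $\sup_{x>0}$ gives \eqref{dbc7}–\eqref{dbc8}, with constants depending only on $p$ and $q$.
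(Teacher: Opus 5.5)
Your architecture matches the paper's: H\"older plus Theorem~\ref{dbth3} on $E_1^+$, inversion of the exterior onto $E_1^-$ with the weight in $r_{-(2)}$ absorbing the Jacobian, and the difference kernel $\frac{|k_1-k_2|}{|1-k_1\zeta||1-k_2\zeta|}$. The gap is in the exterior piece, and it comes from an algebra slip. From your (correct) integrand one has
\[
\frac{\zeta|\zeta|^{-2}}{1-k\zeta}=\frac{\zeta}{\bar\zeta}\Big(\frac{1}{\zeta}-\frac{1}{\zeta-k^{-1}}\Big),
\]
so the density is $\frac{\zeta}{\bar\zeta}\psi_{(0)}r_{-(2)}$ (times an exponential of modulus at most one). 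Its modulus is that of $F_0\in L^s(E_1^-)$, $s>2$. Your $F=\bar\zeta|\zeta|^{-2}F_0$ carries a spurious factor of modulus $|\zeta|^{-1}$. The $|\zeta|^{-1}$ you worried about is just the Cauchy kernel with pole at $0$, not a defect of the density. Hence $g_2^+(k)=g_0(0)-g_0(k^{-1})$, where $g_0$ is the Cauchy transform over $E_1^-$ of an $L^s$ density, and Theorem~\ref{dbth3} gives the uniform bound at once. This is exactly the paper's argument.

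The workaround you adopt fails as written. You replaced $|1-k\zeta|=|k||\zeta-k^{-1}|$ by $|\zeta-k^{-1}|$, losing the factor $|k|^{-1}$. The quantity $\iint_{E_1^-}\frac{|F_0|}{|\zeta||\zeta-k^{-1}|}\mathrm{d}\sigma_\zeta$ is not bounded uniformly in $k$: already for $F_0\equiv 1$ it grows logarithmically as $k\to\infty$. Lemma~\ref{dble1} with $\mu=\nu=1$ only gives $M_2+8\pi\ln|k|$. Moreover, pairing with $F_0\in L^s$ by H\"older requires $\mu=\nu=s'=\frac{s}{s-1}$, and then the bound grows like $|k|^{2/s}$. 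The repair is to keep $|1-k\zeta|$ and use
\[
\frac{1}{|\zeta||1-k\zeta|}\le\frac{1}{|\zeta|}+\frac{1}{|\zeta-k^{-1}|}.
\]
Each term has $L^{s'}(E_1^-)$ norm bounded independently of $k$ by \eqref{dbb7}--\eqref{dbb8}.

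In the H\"older part your case split is backwards. For $|k_1|,|k_2|\le\frac12$ the kernel is at most $4|k_1-k_2|\le 4|k_1-k_2|^\alpha$, since $|k_1-k_2|\le1$, and no lemma is needed. Lemma~\ref{dble1} (with $\mu=\nu=s'$) is what handles $|k_j|\ge\frac12$. Writing $|1-k_j\zeta|=|k_j||\zeta-k_j^{-1}|$, it yields $|k_1^{-1}-k_2^{-1}|^\alpha\le 4^\alpha|k_1-k_2|^\alpha$. This is simply the H\"older estimate of Theorem~\ref{dbth3} for $g_0$ at $\lambda_j=k_j^{-1}$, i.e.\ the paper's case (3), so no ``exchange of regions'' is needed. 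Your triangle-inequality patch for the mixed case is sound and simpler than the paper's case (2). If $|k_1|<\frac12\le|k_2|$, pick $k_3$ on the segment $[k_1,k_2]$ with $|k_3|=\frac12$. Then $|k_1-k_3|,|k_3-k_2|\le|k_1-k_2|$, and the two one-sided estimates combine with constant $2$.
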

{\bf Proof}. For convenience, we introduce the following domains
\[ E_2^\pm=\{k:|k|\geq1, {\rm Im}k\gtrless0\}.\]
We split the infinite domain $\mathbb{C}^\pm$ into $E_1^\pm$ and $E_2^\pm$, and let
\begin{equation}\label{dbc9}
 g^\pm(k;x)=g_1^\pm(k;x)+g_2^\pm(k;x),
\end{equation}
where
\begin{equation}\label{dbc10}
\begin{aligned}
g_1^\pm(k;x)=\psi w_\mp T_{E_1^\pm}(k;x)\chi_{\{x>0\}},\\
g_2^\pm(k;x)=\psi w_\mp T_{E_2^\pm}(k;x)\chi_{\{x>0\}}.
\end{aligned}
\end{equation}

Since $E_1^\pm$ is a bounded domain, using the H\"older inequality and the Theorem \ref{dbth3}, we have
\begin{equation}\label{dbc11}
\begin{aligned}
\|g_1^\pm(k;x)\|&\leq M_{4,\pm}^\mu\sup\limits_{x>0}\|\psi w_\mp\|_{\mathbf{L}_k^\mu(E_1^\pm)}\\
&\leq M_{4,\pm}^{p,q}\|r_\mp(k)\|_{L^q(E_1^\pm)}\sup\limits_{x>0}\|\psi\|_{\mathbf{L}_k^p(E_1^\pm)},
\end{aligned}
\end{equation}
where $\mu>2$ and $\frac{1}{p}+\frac{1}{q}=\frac{1}{\mu}$.

For the infinite domains $E_2^\pm$, they can be mapped into $E_1^\mp$ via the transformation $k\to k^{-1}$. Thus we find that
\begin{equation}\label{dbc12}
\begin{aligned}
g_2^\pm(k;x)&=\frac{1}{2i\pi}\iint_{E_2^\pm}\frac{\psi(z;x)w_\mp(z;x)}{z-k}\mathrm{d}z\wedge \mathrm{d}\bar{z} ~\chi_{\{x>0\}}\\
&=\frac{1}{2i\pi}\iint_{E_1^\mp}\frac{\psi(z^{-1};x)w_\mp(z^{-1};x)}{\bar{z}^{2}}(\frac{1}{z}-\frac{1}{z-k^{-1}})\mathrm{d}z\wedge \mathrm{d}\bar{z}~ \chi_{\{x>0\}}\\
&:=g_0^\mp(0;x)-g_0^\mp(k^{-1};x),
\end{aligned}
\end{equation}
where
\[
\begin{aligned}
 g_0^\mp(\lambda;x)&=\frac{1}{2i\pi}\iint_{E_1^\mp}\frac{\psi(z^{-1};x)\bar{z}^{-2}w_\mp(z^{-1};x)}{z-\lambda}\mathrm{d}z\wedge \mathrm{d}\bar{z}~\chi_{\{x>0\}}\\
 &=\frac{1}{2i\pi}\iint_{E_1^\mp}\frac{\psi_{(0)}(z;x)w_{(2),\mp}(z;x)}{z-\lambda}\mathrm{d}z\wedge \mathrm{d}\bar{z}~\chi_{\{x>0\}}.
\end{aligned}\]

Here we have use the notation in \eqref{dba1}. It is noted that the exponents $\mathrm{e}^{\pm2iz^{-1}x}$ in $w_{(2),\mp}(z;x)$ are bounded in $E_1^\mp$ for $x>0$. Applying same discussion to $g_0^\mp(\lambda;x)$ as $g_1^\pm(k;x)$ in \eqref{dbc11}, and using the triangle inequality, we have
\begin{equation}\label{dbc13}
\|g_2^\pm(k;x)\|\leq \tilde{M}_{4,\pm}^{p,q}\|r_{(2),\mp}\|_{L_k^q(E_1^\mp)}\sup\limits_{x>0}\|\psi_{(0)}\|_{\mathbf{L}_k^p(E_1^\mp)}.
\end{equation}
Then using the triangle inequality again, the condition \eqref{dbc7} is obtained by the substitution of estimates \eqref{dbc11} and \eqref{dbc13} into \eqref{dbc9}.

Next, we prove the condition \eqref{dbc8}. For $g_1^\pm(k;x)$ and $k_1,k_2\in\mathbb{C}$, using the H\"older inequality and the Theorem \ref{dbth3}, we have
\begin{equation}\label{dbc14}
\begin{aligned}
\|g_1^\pm(k_1;x)-g_1^\pm(k_2;x)\|&\leq M_{5,\pm}^{\mu}\sup\limits_{x>0}\|\psi w_\mp\|_{\mathbf{L}_k^\mu(E_1^\pm)}\|k_1-k_2|^{\frac{\mu-2}{\mu}}\\
&\leq M_{5,\pm}^{p,q}\|r_\pm\|_{L_k^q(E_1^\pm)}\sup\limits_{x>0}\|\psi\|_{\mathbf{L}_k^p(E_1^\pm)}\|k_1-k_2|^\alpha,
\end{aligned}
\end{equation}
where $\mu>2$, $\frac{1}{p}+\frac{1}{q}=\frac{1}{\mu}$ and $\alpha=\frac{\mu-2}{\mu}$.

Now, we give the same estimation for $g_2^\pm(k;x)$ given in \eqref{dbc12}. To this end, we find that
\begin{equation}\label{dbc15}
\begin{aligned}
&\|g_2^\pm(k_1;x)-\|g_2^\pm(k_2;x)\|\\
&=\|g_0^\mp(k_1^{-1};x)-\|g_0^\mp(k_2^{-1};x)\|\\
&\leq\frac{|k_1-k_2|}{\pi}\iint_{E_1^\mp}\|\psi_{(0)}(z;x)\||r_{(2),\mp}(z)|\frac{1}{|1-zk_1||1-zk_2|}\mathrm{d}\sigma_z.
\end{aligned}
\end{equation}
We will consider the following cases about the kernel in \eqref{dbc15}:

(1).  If $|k_j|\leq\frac{1}{2}, j=1,2$, then for $|z|\leq1$, $|zk_j|\leq\frac{1}{2}$ and $\frac{1}{|1-zk_j|}\leq2$. In this case, we can get
\begin{equation}\label{dbc16}
\begin{aligned}
&\|g_2^\pm(k_1;x)-g_2^\pm(k_2;x)\|\\
&\leq\frac{|k_1-k_2|}{\pi}\tilde{M}_{5,\mp}^\mu\sup\limits_{x>0}\|\|\psi_{(0)}(z;x)\||r_{(2),\mp}(z)|\|_{{L}_k^\mu(E_1^\mp)}\\
&\leq|k_1-k_2|\tilde{M}_{5,\mp}^{p,q}\sup\limits_{x>0}\|\psi_{(0)}\|_{\mathbf{L}_k^{p}(E_1^\mp)}\|r_{(2),\mp}\|_{{L}_k^{q}(E_1^\mp)}\\
&\leq\tilde{M}_{5,\mp}^{p,q}\sup\limits_{x>0}\|\psi_{(0)}\|_{\mathbf{L}_k^{p}(E_1^\mp)}\|r_{(2),\mp}\|_{{L}_k^{q}(E_1^\mp)}|k_1-k_2|^\alpha,
\end{aligned}
\end{equation}
in view of $|k_1-k_2|<1$.

(2). If $|k_1|<\frac{1}{2}, |k_2|\geq\frac{1}{2}$, then $\frac{1}{|1-zk_1|}\leq2$. Thus we have
\[\begin{aligned}
&\|g_2^\pm(k_1;x)-g_2^\pm(k_2;x)\|\\
&\leq\frac{|k_1-k_2|}{k_2\pi}\tilde{M}_{6,\mp}^\mu\sup\limits_{x>0}\|\|\psi_{(0)}(z;x)\||r_{(2),\mp}(z)|\|_{{L}_z^\mu(E_1^\mp)}
\big\|\frac{1}{z-k_2^{-1}}\big\|_{{L}_z^\nu(E_1^\mp)},
\end{aligned}\]
where $\nu=\frac{\mu-2}{\mu}$. Since $|k_2^{-1}|\leq2$, then $|z-k_2^{-1}|<|z|+|k_2^{-1}|<3$ and
\begin{equation}\label{dbc19a}
\big\|\frac{1}{z-k_2^{-1}}\big\|_{{L}_k^\nu(E_1^\mp)}\leq\left(\pi\int_0^3r^{1-\nu}\mathrm{d}r\right)^{\frac{1}{\nu}}
=\left(\frac{\pi3^{2-\nu}}{2-\nu}\right)^{\frac{1}{\nu}}.
\end{equation}
In addition, using $|\frac{k_1}{k_2}-1|<2$, we can get that
\[\begin{aligned}
\frac{|k_1-k_2|}{k_2}&=\frac{|k_1-k_2|^{\frac{\mu-2}{\mu}}|k_1-k_2|^{\frac{2}{\mu}}}{|k_2|^{\frac{\mu-2}{\mu}}|k_2|^{\frac{2}{\mu}}}
=\frac{|\frac{k_1}{k_2}-1|^{\frac{2}{\mu}}}{|k_2|^{\frac{\mu-2}{\mu}}} |k_1-k_2|^{\frac{\mu-2}{\mu}}\\
&\leq2|k_1-k_2|^\alpha.
\end{aligned}
\]
Thus we have
\begin{equation}\label{dbc17}
\|g_2^\pm(k_1;x)-g_2^\pm(k_2;x)\|\leq\tilde{M}_{7,\mp}^{p,q}\sup\limits_{x>0}\|\psi_{(0)}\|_{\mathbf{L}_k^{p}(E_1^\mp)}\|r_{(2),\mp}\|_{{L}_k^{q}(E_1^\mp)}|k_1-k_2|^\alpha.
\end{equation}

(3). If $|k_j|\geq\frac{1}{2}, j=1,2$, then using \eqref{dbc19a}, we find that
\begin{equation}\label{dbc18}
\begin{aligned}
&\|g_2^\pm(k_1;x)-g_2^\pm(k_2;x)\|=\|g_0^\mp(k_1^{-1};x)-\|g_0^\mp(k_2^{-1};x)\|\\
&\leq\tilde{M}_{8,\mp}^{p,q}\sup\limits_{x>0}\|\psi_{(0)}\|_{\mathbf{L}_k^{p}(E_1^\mp)}\|r_{(2),\mp}\|_{{L}_k^{q}(E_1^\mp)}|\frac{1}{k_1}-\frac{1}{k_2}|^\alpha\\
&\leq\tilde{M}_{9,\mp}^{p,q}\sup\limits_{x>0}\|\psi_{(0)}\|_{\mathbf{L}_k^{p}(E_1^\mp)}\|r_{(2),\mp}\|_{{L}_k^{q}(E_1^\mp)}|k_1-k_2|^\alpha.
\end{aligned}
\end{equation}

From \eqref{dbc14}-\eqref{dbc18}, we can get that
\[\begin{aligned}
&\|g^{\pm}(k_1)-g^{\pm}(k_2)\|\\
&\leq \tilde{M}_{\pm}^{p,q}|k_1-k_2|^\alpha\left(\sup\limits_{x>0}\|\psi\|_{\mathbf{L}_k^p(E_1^\pm)}\|r_\pm\|_{L_k^q(E_1^\pm)}+\sup\limits_{x>0}\|\psi_{(0)}\|_{\mathbf{L}_k^{p}(E_1^\mp)}\|r_{(2),\mp}\|_{{L}_k^{q}(E_1^\mp)}\right)\\
&\leq \tilde{M}_{\pm}^{p,q}|k_1-k_2|^\alpha\left(\sup\limits_{x>0}\|\psi\|_{\mathbf{L}_k^p(E_1^\pm)}+\sup\limits_{x>0}\|\psi_{(0)}\|_{\mathbf{L}_k^{p}(E_1^\mp)}\right)\left(\|r_\pm\|_{L_k^q(E_1^\pm)}+\|r_{(2),\mp}\|_{{L}_k^{q}(E_1^\mp)}\right).
\end{aligned}\]
Thus the condition \eqref{dbc8} is proved in view of the definition \eqref{dba5}. \qquad $\square$

\begin{lemma}\label{dble3}
Let $\psi\in {L}_x^\infty(\mathbb{R}^-,\mathbf{L}_k^{p,0}(\mathbb{C}^\pm))$, $r_\pm \in L_k^{q,2}(\mathbb{C}^\pm)$, and
 let matrix functions
\begin{equation}\label{dbc23a}
h^\pm(k;x)=\psi w_\pm T_{\mathbb{C}^\pm}(k;x)\chi_{\{x<0\}},
\end{equation}
then we have the following conditions
\begin{equation}\label{dbc19}
\|h^\pm(k;x)\|\leq N_{\pm}^{p,q}\|r_\pm\|_{L^{q,2}(\mathbb{C}^\pm)}\sup\limits_{x<0}\|\psi\|_{\mathbf{L}_k^{p,0}(\mathbb{C}^\pm)},
\end{equation}
\begin{equation}\label{dbc20}
 \|h^{\pm}(k_1;x)-h^{\pm}(k_2;x)\|\leq \tilde{N}_\pm^{p,q}\|r_\pm\|_{L^{q,2}(\mathbb{C}^\pm)}\sup\limits_{x<0}\|\psi\|_{\mathbf{L}_k^{p,0}(\mathbb{C}^\pm)}\|k_1-k_2|^\alpha,
\end{equation}
where $\frac{1}{p}+\frac{1}{q}<\frac{1}{2}$, and $\alpha=1-2\big(\frac{1}{p}+\frac{1}{q}\big)$. Here the positive constants $N_{\pm}^{p,q},\tilde{N}_\pm^{p,q}$ depend on $p$ and $q$.
\end{lemma}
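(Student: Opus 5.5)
The plan is to repeat the proof of Lemma \ref{dble2} with $x>0$ replaced by $x<0$; the only new ingredient is to check that the exponentials in $w_\pm$ stay bounded on the regions of integration that actually occur. For $x<0$ and $z\in\mathbb{C}^+$ we have $|\mathrm{e}^{-2izx}|=\mathrm{e}^{2x{\rm Im}z}\leq1$, so $w_+$ is bounded by $|r_+|$ on $\mathbb{C}^+$. Likewise, for $z\in\mathbb{C}^-$ we have $|\mathrm{e}^{2izx}|=\mathrm{e}^{-2x{\rm Im}z}\leq 1$, so $w_-$ is bounded by $|r_-|$ on $\mathbb{C}^-$. Hence $\|\psi w_\pm\|\leq\|\psi\||r_\pm|$ pointwise on $\mathbb{C}^\pm$, uniformly in $x<0$.

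First split $\mathbb{C}^\pm=E_1^\pm\cup E_2^\pm$ and write $h^\pm=h_1^\pm+h_2^\pm$, as in \eqref{dbc9}--\eqref{dbc10}. On the bounded part, choose $\mu>2$ with $\frac1\mu=\frac1p+\frac1q$. Theorem \ref{dbth3} together with H\"older's inequality then gives
\[\|h_1^\pm\|\leq M\|r_\pm\|_{L^q(E_1^\pm)}\sup_{x<0}\|\psi\|_{\mathbf{L}^p(E_1^\pm)},\]
and the same bound holds for the H\"older quotient with exponent $\alpha=\frac{\mu-2}{\mu}=1-2(\frac1p+\frac1q)$.

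On $E_2^\pm$, substitute $z\to z^{-1}$, which maps $E_2^\pm$ onto $E_1^\mp$. As in \eqref{dbc12}, this gives $h_2^\pm(k;x)=h_0^\mp(0;x)-h_0^\mp(k^{-1};x)$, where $h_0^\mp$ is a Cauchy--Green integral over $E_1^\mp$ with density $\psi_{(0)}(z;x)w_{(2),\pm}(z;x)$. The exponential in $w_{(2),\pm}$ is evaluated at $z^{-1}$, and $z^{-1}\in\mathbb{C}^\pm$ when $z\in E_1^\mp$. By the observation above, this exponential is therefore bounded by $1$ for $x<0$. The argument for $g_1^\pm$ then gives the sup bound in terms of $\|r_{(2),\pm}\|_{L^q(E_1^\mp)}\sup_{x<0}\|\psi_{(0)}\|_{\mathbf{L}^p(E_1^\mp)}$. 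Adding the two pieces and using the norm \eqref{dba5} yields \eqref{dbc19}.

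For the H\"older estimate \eqref{dbc20} on $h_2^\pm$, the plan is to copy the three-case analysis of \eqref{dbc15}--\eqref{dbc18}. The cases are: $|k_1|,|k_2|\leq\frac12$, where the kernel $|1-zk_j|^{-1}$ is bounded by $2$; mixed sizes, where one factor $\frac{1}{z-k_2^{-1}}$ is controlled in $L^\nu(E_1^\mp)$ as in \eqref{dbc19a} and $\frac{|k_1-k_2|}{|k_2|}\leq 2|k_1-k_2|^\alpha$; and $|k_1|,|k_2|\geq\frac12$, where Theorem \ref{dbth3} applied to $h_0^\mp$ gives a bound in $|k_1^{-1}-k_2^{-1}|^\alpha\leq C|k_1-k_2|^\alpha$.

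The only real obstacle is the sign bookkeeping, namely verifying that for $x<0$ it is exactly $w_\pm$ on $\mathbb{C}^\pm$, and $w_{(2),\pm}$ on $E_1^\mp$ after inversion, that carry bounded exponentials. Once this is done, every constant depends only on $p$ and $q$, and the estimates hold uniformly for $x<0$.
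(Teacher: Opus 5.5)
Your proposal is correct and follows the paper's intended argument. The paper's entire proof of this lemma is ``proved similarly'' to Lemma~\ref{dble2}: the same $E_1^\pm/E_2^\pm$ splitting, the inversion $z\to z^{-1}$ onto $E_1^\mp$, Theorem~\ref{dbth3}, and the three-case H\"older analysis. Your explicit check supplies the sign bookkeeping the paper leaves implicit: for $x<0$, $\mathrm{e}^{-2izx}$ is bounded on $\mathbb{C}^+$, $\mathrm{e}^{2izx}$ is bounded on $\mathbb{C}^-$, and inversion sends $E_1^\mp$ into $\mathbb{C}^\pm$.
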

{\bf Proof}. The lemma can be proved similarly. \qquad $\square$

\begin{theorem}\label{dbth4}
Let $\psi\in {L}_x^\infty(\mathbb{R},\mathbf{L}_k^{p,0}(\mathbb{C}))$, $r_\pm \in L_k^{q,2}(\mathbb{C})$,
then the integral $\psi RT_\mathbb{C}(k;x)$ defined in \eqref{dbc5} satisfies the following conditions
\begin{equation}\label{dbc21}
\|\psi RT_\mathbb{C}(k;x)\|\leq M^{p,q}(\|r_+\|_{L^{q,2}(\mathbb{C})}+\|r_-\|_{L^{q,2}(\mathbb{C})})\sup\limits_{x\in\mathbb{R}}\|\psi\|_{\mathbf{L}_k^{p,0}(\mathbb{C})},
\end{equation}
and for $k_1,k_2\in\mathbb{C}$,
\begin{equation}\label{dbc22}
\begin{aligned}
&\|\psi RT_\mathbb{C}(k_1;x)-\psi RT_\mathbb{C}(k_2;x)\|\\
&\leq \tilde{M}^{p,q}(\|r_+\|_{L^{q,2}(\mathbb{C})}+\|r_-\|_{L^{q,2}(\mathbb{C})})\sup\limits_{x\in\mathbb{R}}\|\psi\|_{\mathbf{L}_k^{p,0}(\mathbb{C})}\|k_1-k_2|^\alpha,
\end{aligned}
\end{equation}
where $\frac{1}{p}+\frac{1}{q}<\frac{1}{2}$, and $\alpha=1-2\big(\frac{1}{p}+\frac{1}{q}\big)$.
Here the positive constants $M^{p,q},\tilde{M}^{p,q}$ depend on $p$ and $q$.
\end{theorem}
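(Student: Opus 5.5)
The plan is to combine Lemma \ref{dble2} and Lemma \ref{dble3} through the definition \eqref{dbc5}. For $x>0$ the operator reduces to $\psi RT_\mathbb{C}(k;x)=g^+(k;x)+g^-(k;x)$, where $g^\pm=\psi w_\mp T_{\mathbb{C}^\pm}\chi_{\{x>0\}}$ as in \eqref{dbc6}. For $x<0$ it reduces to $h^+(k;x)+h^-(k;x)$, where $h^\pm=\psi w_\pm T_{\mathbb{C}^\pm}\chi_{\{x<0\}}$ as in \eqref{dbc23a}. The two indicator functions have disjoint supports, so for every fixed $x\neq0$ exactly one bracket survives. The point $x=0$ is a null set and is irrelevant for the $\sup_x$ estimate; if one wishes, both brackets can be estimated there as well.

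First I record the norm comparisons. Restricting $\psi$ to $\mathbb{C}^\pm$ gives, by \eqref{dba5},
\[
\|\psi\|_{\mathbf{L}^{p,0}(\mathbb{C}^\pm)}=\|\psi\|_{\mathbf{L}^p(E_1^\pm)}+\|\psi_{(0)}\|_{\mathbf{L}^p(E_1^\mp)}\leq \|\psi\|_{\mathbf{L}^p(E_1)}+\|\psi_{(0)}\|_{\mathbf{L}^p(E_1)}=\|\psi\|_{\mathbf{L}^{p,0}(\mathbb{C})}.
\]
The same inequality holds for $r_\pm$: $\|r_\pm\|_{L^{q,2}(\mathbb{C}^\pm)}\leq\|r_\pm\|_{L^{q,2}(\mathbb{C})}$ and $\|r_\mp\|_{L^{q,2}(\mathbb{C}^\pm)}\leq\|r_\mp\|_{L^{q,2}(\mathbb{C})}$. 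Consequently the hypotheses $\psi\in L_x^\infty(\mathbb{R},\mathbf{L}_k^{p,0}(\mathbb{C}))$ and $r_\pm\in L_k^{q,2}(\mathbb{C})$ imply the hypotheses of both lemmas on the corresponding half-lines. Moreover, $\sup_{x>0}$ and $\sup_{x<0}$ are each bounded by $\sup_{x\in\mathbb{R}}$.

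Next I treat the case $x>0$ by the triangle inequality and \eqref{dbc7}:
\[
\|\psi RT_\mathbb{C}(k;x)\|\leq\|g^+\|+\|g^-\|\leq \big(M_+^{p,q}\|r_-\|_{L^{q,2}(\mathbb{C})}+M_-^{p,q}\|r_+\|_{L^{q,2}(\mathbb{C})}\big)\sup_{x\in\mathbb{R}}\|\psi\|_{\mathbf{L}_k^{p,0}(\mathbb{C})}.
\]
For $x<0$ I use \eqref{dbc19} in the same way, obtaining constants $N_\pm^{p,q}$. Then I set $M^{p,q}=\max\{M_\pm^{p,q},N_\pm^{p,q}\}$, which gives \eqref{dbc21}. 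The H\"older estimate \eqref{dbc22} follows identically. For fixed $x$ the same bracket is active at both $k_1$ and $k_2$, so the difference is $g^\pm(k_1)-g^\pm(k_2)$, respectively $h^\pm(k_1)-h^\pm(k_2)$. I apply \eqref{dbc8} and \eqref{dbc20}, which have the common exponent $\alpha=1-2(\frac1p+\frac1q)$, and take $\tilde M^{p,q}=\max\{\tilde M_\pm^{p,q},\tilde N_\pm^{p,q}\}$.

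The only step needing care is matching the $r$-norms, since Lemma \ref{dble2} involves $r_\mp$ on $\mathbb{C}^\pm$ while Lemma \ref{dble3} involves $r_\pm$ on $\mathbb{C}^\pm$. I would check this bookkeeping via the half-plane splitting above. All four combinations are dominated by $\|r_+\|_{L^{q,2}(\mathbb{C})}+\|r_-\|_{L^{q,2}(\mathbb{C})}$, so no additional analysis is needed beyond the lemmas.
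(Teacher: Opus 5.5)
Your proposal is correct and follows essentially the same route as the paper. The paper likewise writes $\psi RT_\mathbb{C}=g^++g^-+h^++h^-$, notes that $\|r_\pm\|_{L^{q,2}(\mathbb{C}^\pm)}\le\|r_\pm\|_{L^{q,2}(\mathbb{C})}$ and $\sup_{x\gtrless0}\|\psi\|_{\mathbf{L}_k^{p,0}(\mathbb{C}^\pm)}\le\sup_{x\in\mathbb{R}}\|\psi\|_{\mathbf{L}_k^{p,0}(\mathbb{C})}$, and then collects the bounds of Lemmas \ref{dble2} and \ref{dble3}. One small point: at $x=0$ both indicators in \eqref{dbc5} vanish, so $\psi RT_\mathbb{C}(k;0)=0$ and the estimates hold trivially there, without any null-set argument.
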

{\bf Proof}. We note that $\|r_\pm\|_{L^{q,2}(\mathbb{C}^\pm)}\leq\|r_\pm\|_{L^{q,2}(\mathbb{C})}$ and
$\sup\limits_{x\gtrless0}\|\psi\|_{\mathbf{L}_k^{p,0}(\mathbb{C}^\pm)}\leq\sup\limits_{x\in\mathbb{R}}\|\psi\|_{\mathbf{L}_k^{p,0}(\mathbb{C})}$.
In addition,
\[\psi RT_\mathbb{C}(k;x)=g^+(k;x)+g^-(k;x)+h^+(k;x)+h^-(k;x),\]
and collect the conditions in Lemma \ref{dble2} and Lemma \ref{dble3}, 
the theorem can be proved. \qquad $\square$

The theorem \ref{dbth4} implies that if $r_\pm \in L_k^{q,2}(\mathbb{C})$ then
\[RT_\mathbb{C}: \mathbf{L}_k^{p,0}(\mathbb{C})\to \mathbf{C}_k^\alpha(\mathbb{C}).\]
We note that $C^\alpha(\mathbb{C})\subset L^{p,0}(\mathbb{C})$. In fact, it is easy to see that $C^\alpha(E_1)\subset L^{p}(E_1)$ \cite{INV1962}, and the space $C^\alpha(E_1)$ is density in $L^{p}(E_1)$. Now, outside the unit circle $E_1$, for every $f(k)\in C^\alpha(\mathbb{C}), |k|\geq 1$, that is $f_{(0)}(k)=f(k^{-1})\in C^\alpha(E_1)$, then for $k_1\in E_1$
\[\begin{aligned}
\left(\iint_{E_1}|f(k^{-1})|^p\mathrm{d}\sigma_k\right)^{\frac{1}{p}}&\leq 2^{\frac{p-1}{p}}\left(\iint_{E_1}|f_{(0)}(k)-f_{(0)}(k_1)|^p\mathrm{d}\sigma_k+ \pi|f_{(0)}(k_1)|^p\right)^{\frac{1}{p}}\\
&\leq 2^{\frac{p-1}{p}}\left(\|f_{(0)}(k)\|^p_{h_\alpha(E_1)}\iint_{E_1}|k-k_1|^{p\alpha}\mathrm{d}\sigma_k+ \pi\|f_{(0)}(k)\|_{L^\infty(E_1)}^p\right)^{\frac{1}{p}}\\
&\leq M(p,\alpha)\|f(k^{-1})\|_{C^\alpha(E_1)},
\end{aligned}\]
which implies that $f_{(0)}(k)\in L^p(E_1)$.
Here we have used the inequalities for $a,b>0$
\[\begin{aligned}
&(a+b)^\theta\leq 2^{\theta-1}(a^\theta+b^\theta), \quad &\theta\geq1;\\
&(a+b)^\theta\leq (a^\theta+b^\theta)\leq 2^{1-\theta}(a+b)^\theta, \quad & 0<\theta<1.
\end{aligned}\]
Thus, if $f(k)\in {C}^\alpha(\mathbb{C})$, then $f(k)\in{L}^{p,0}(\mathbb{C})$,  and
\begin{equation}\label{dbc23}
\|f(k)\|_{{L}^{p,0}(\mathbb{C})}\leq M(p,\alpha) \|f(k)\|_{{C}^\alpha(\mathbb{C})}.
\end{equation}
Moreover, we can get that ${C}^\alpha(\mathbb{C})\subset{L}^{p,\nu}(\mathbb{C})$, $p\nu<2$.

Since $C^\alpha(\mathbb{C})\subset L^{p,0}(\mathbb{C})$, we can choose
\[\psi(k;x)\in {L}_x^\infty(\mathbb{R},\mathbf{C}_k^{\alpha}(\mathbb{C}))\subset {L}_x^\infty(\mathbb{R},\mathbf{L}_k^{p,0}(\mathbb{C})),\]
then from the theorem \ref{dbth4} we obtain that if $r_\pm \in L_k^{q,2}(\mathbb{C})$ then
\[RT_\mathbb{C}: \mathbf{C}_k^{\alpha}(\mathbb{C})\to \mathbf{C}_k^\alpha(\mathbb{C}), \quad 0<\alpha<1.\]

It is remarked that the operator can also be obtained by directly showing the conditions
\[\|g^\pm(k;x)\|_{\mathbf{C}_k^{\alpha}(\mathbb{C})}\leq M_\pm^{p,q,\alpha}\|r_\mp\|_{L^{q,2}(\mathbb{C}^\pm)}\sup\limits_{x>0}\|\psi\|_{\mathbf{C}_k^{\alpha}(\mathbb{C}^\pm)},\]
and
\[\|h^\pm(k;x)\|_{\mathbf{C}_k^{\alpha}(\mathbb{C})}\leq N_\pm^{p,q,\alpha}\|r_\pm\|_{L^{q,2}(\mathbb{C}^\pm)}\sup\limits_{x>0}\|\psi\|_{\mathbf{C}_k^{\alpha}(\mathbb{C}^\pm)},\]
but the associated estimates will be more complicated.

\begin{theorem}\label{dbco1}
Let $\psi(k;x)\in {L}_x^\infty(\mathbb{R},\mathbf{C}_k^{\alpha}(\mathbb{C}))$, $r_\pm(k) \in L^{q,2}(\mathbb{C})$,
then $\psi RT_\mathbb{C}(k;x)\in \mathbf{C}_k^\alpha(\mathbb{C})$ and
\begin{equation}\label{dbc28}
\|RT_\mathbb{C}(k;x)\|_{\mathbf{C}_k^{\alpha}\to \mathbf{C}_k^\alpha}\leq M^{p,q,\alpha}(\|r_+\|_{L_k^{q,2}(\mathbb{C})}+\|r_-\|_{L_k^{q,2}(\mathbb{C})}).
\end{equation}
where $\frac{1}{p}+\frac{1}{q}<\frac{1}{2}$, and $\alpha=1-2\big(\frac{1}{p}+\frac{1}{q}\big)$.
\end{theorem}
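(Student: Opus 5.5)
The plan is to obtain Theorem \ref{dbco1} by composing Theorem \ref{dbth4} with the embedding \eqref{dbc23}, and then checking that the resulting estimate is exactly the $\mathbf{C}^\alpha_k$ operator norm. The $\mathbf{C}^\alpha_k(\mathbb{C})$ norm is to be read as in the definition of $C^\gamma(\mathbb{C})$ after Theorem \ref{dbth3}, i.e. on $E_1$ and after the inversion $k\mapsto k^{-1}$, with sup norm plus H\"older seminorm.

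First, fix $x\in\mathbb{R}$ and $\psi\in L_x^\infty(\mathbb{R},\mathbf{C}_k^{\alpha}(\mathbb{C}))$. By \eqref{dbc23}, applied entrywise and with the equivalence of matrix norms, we have
\[\sup_{x\in\mathbb{R}}\|\psi\|_{\mathbf{L}_k^{p,0}(\mathbb{C})}\leq M(p,\alpha)\sup_{x\in\mathbb{R}}\|\psi\|_{\mathbf{C}_k^{\alpha}(\mathbb{C})}.\]
Hence the hypotheses of Theorem \ref{dbth4} hold. Inequality \eqref{dbc21} then gives the $L^\infty_k$ part of the $\mathbf{C}^\alpha$ norm of $\psi RT_\mathbb{C}(\cdot;x)$ on all of $\mathbb{C}$, so in particular on $E_1$ and at the inverted points. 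Inequality \eqref{dbc22} gives the H\"older seminorm with exponent $\alpha=1-2(\frac1p+\frac1q)$ for $k_1,k_2\in E_1$. Adding the two, with constant $M^{p,q}+\tilde M^{p,q}$ times $M(p,\alpha)$, yields the bound for the $E_1$ component.

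Second, I control the exterior component $k\mapsto \psi RT_\mathbb{C}(k^{-1};x)$ on $E_1$. The sup bound is immediate from \eqref{dbc21}. For the H\"older part, the decomposition \eqref{dbc12} in the proof of Lemma \ref{dble2} shows that for $|k|\geq1$ the $E_2^\pm$ pieces are $g_0^\mp(0;x)-g_0^\mp(k^{-1};x)$. These are Cauchy--Green integrals over the bounded domain $E_1^\mp$ with integrand in $L^\mu$, $\mu>2$, where the exponentials are bounded since $x>0$ (and analogously for $x<0$ and the $h^\pm$). So Theorem \ref{dbth3} gives H\"older continuity in the variable $\lambda=k^{-1}$ directly. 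The $E_1^\pm$ pieces $g_1^\pm$ are holomorphic for $|k|>1$ and decay like $1/k$. In $\lambda=k^{-1}$ they are $\lambda\cdot(\text{bounded smooth})$ away from the unit circle, and near $|\lambda|=1$ they inherit H\"older continuity from case (3) of the proof of Lemma \ref{dble2}, where $|k_1^{-1}-k_2^{-1}|\leq 4|k_1-k_2|$ for $|k_j|\geq\frac12$. Collecting these pieces gives the inverted seminorm bounded by a constant times $(\|r_+\|_{L^{q,2}}+\|r_-\|_{L^{q,2}})\sup_x\|\psi\|_{\mathbf{C}^\alpha_k}$.

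Finally, taking the supremum over $\psi$ with unit norm and over $x$ gives \eqref{dbc28} with $M^{p,q,\alpha}$ depending only on $p,q,\alpha$. The main obstacle is the second step: making rigorous that the H\"older estimate survives the inversion near $|k|=1$ and $k=\infty$. I would handle this, as in Lemma \ref{dble2}, by splitting into the cases $|k_j|\leq\frac12$, mixed, and $|k_j|\geq\frac12$ in the variable $\lambda$, and in each case reusing the kernel bound $|1-zk_1|^{-1}|1-zk_2|^{-1}$ together with Lemma \ref{dble1}.
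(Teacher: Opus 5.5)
Your proposal is correct and follows the paper's route: the paper obtains this theorem by inserting the embedding \eqref{dbc23} into Theorem \ref{dbth4}, exactly as in your first step. Your second step, checking the H\"older seminorm of $\lambda\mapsto\psi RT_{\mathbb{C}}(\lambda^{-1};x)$ on $E_1$, is left implicit in the paper. It is needed when $\mathbf{C}^\alpha_k(\mathbb{C})$ is read with the inversion, because a global H\"older bound in $k$ alone does not control this seminorm near $\lambda=0$. Your argument for it is sound: you use $g_2^\pm(\lambda^{-1})=g_0^\mp(0)-g_0^\mp(\lambda)$, and you treat $g_1^\pm$ by the three cases $|\lambda_j|\le\frac12$, mixed, and $|\lambda_j|\ge\frac12$.
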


\begin{theorem}
If $r_\pm \in L_k^{q,2}(\mathbb{C})$, then there exists $\psi(k;x)\in {L}_x^\infty(\mathbb{R},\mathbf{C}_k^{\alpha}(\mathbb{C}))$ satisfying the Dbar equation
\begin{equation}\label{dbknb1}
\bar\partial\psi(k;x)=\left\{\begin{array}{cc}
\psi(k;x)w^-(k;x)\chi_{\{x>0\}}+\psi(k;x)w^+(k;x)\chi_{\{x<0\}}, & k\in \mathbb{C}^+,\\
\psi(k;x)w^+(k;x)\chi_{\{x>0\}}+\psi(k;x)w^-(k;x)\chi_{\{x<0\}}, & k\in \mathbb{C}^-,
\end{array}\right.
\end{equation}
with normalization condition $\psi(k;x)\to I, k\to\infty$. In addition, the Dbar problem is equivalent to the following integral equation
\begin{equation}\label{dbc26}
\psi(k;x)=I+\psi RT_\mathbb{C}(k;x), \quad k\in \mathbb{C}.
\end{equation}
hold almost everywhere in $L_x^\infty(\mathbb{R},\mathbf{C}_k^\alpha(\mathbb{C}))$, where $\frac{1}{p}+\frac{1}{q}<\frac{1}{2}$, $\alpha=1-2\big(\frac{1}{p}+\frac{1}{q}\big)$ and $\psi RT_\mathbb{C}(k;x)$ is defined in \eqref{dbc5}.
\end{theorem}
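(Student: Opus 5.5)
The plan has two halves. First, solve the integral equation \eqref{dbc26} by a contraction argument built on Theorem \ref{dbco1}. Then show that its solution is exactly a solution of the Dbar problem \eqref{dbknb1} with the normalization at infinity. Throughout, fix $p,q$ with $\frac1p+\frac1q<\frac12$ and $\alpha=1-2(\frac1p+\frac1q)$. Work in the Banach space $X=L_x^\infty(\mathbb{R},\mathbf{C}_k^\alpha(\mathbb{C}))$ with norm $\sup_x\|\psi(\cdot;x)\|_{\mathbf{C}^\alpha_k}$.

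By Theorem \ref{dbco1}, $\mathcal{A}:\psi\mapsto\psi RT_\mathbb{C}$ maps $X$ into $X$ with
\[
\|\mathcal{A}\|\leq M^{p,q,\alpha}\big(\|r_+\|_{L^{q,2}(\mathbb{C})}+\|r_-\|_{L^{q,2}(\mathbb{C})}\big).
\]
The constant function $I$ belongs to $X$.

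\textbf{Existence.} Under the small norm condition $M^{p,q,\alpha}(\|r_+\|_{L^{q,2}}+\|r_-\|_{L^{q,2}})<1$, the Neumann series
\[
\psi=\sum_{n\ge0}I\mathcal{A}^n=(\mathcal{I}-RT_\mathbb{C})^{-1}I
\]
converges in $X$. It gives the unique solution of \eqref{dbc26}, and the bound holds uniformly in $x$. I will state this small norm hypothesis explicitly, since the theorem as written presupposes invertibility; this is consistent with the abstract. Without smallness I would try a Fredholm argument instead. $\mathcal{A}$ is compact on $\mathbf{C}^\alpha_k$ for each fixed $x$, since by Theorem \ref{dbth3} it maps into $\mathbf{C}^{\alpha'}$ for some $\alpha'>\alpha$ after a slight adjustment of $p$. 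One would then need to rule out a nontrivial kernel, which is the vanishing-lemma type obstacle. I expect this is where the argument is weakest, so the primary route is the small norm one.

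\textbf{From the integral equation to the Dbar problem.} Write
\[
\psi RT_\mathbb{C}=g^++g^-+h^++h^-,
\]
using the decomposition from the proofs of Lemmas \ref{dble2} and \ref{dble3}, and split each integral over $E_1^\pm$ and $E_2^\pm$. On the bounded pieces $E_1^\pm$, Theorem \ref{dbth2} gives $\bar\partial(f\hat T_{E_1^\pm})=f\chi_{E_1^\pm}$ in the distributional/a.e. sense. Here $f=\psi w_\mp\in L^\mu(E_1^\pm)$ with $\mu>2$, by H\"older, and the exponentials are bounded for the relevant sign of $x$.

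For the $E_2^\pm$ pieces, use the change of variables $k\mapsto k^{-1}$ from \eqref{dbc12}. There $g_2^\pm(k)=g_0^\mp(0)-g_0^\mp(k^{-1})$, and $g_0^\mp$ is again a Cauchy--Green transform over a bounded domain. Applying Theorem \ref{dbth2} together with the chain rule for $\bar\partial$ under $\lambda=k^{-1}$ produces the factor $\bar k^{-2}$. This factor is exactly undone by the weight in $w_{(2),\mp}$, which recovers $\psi w_\mp\chi_{E_2^\pm}$.

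Summing the pieces gives $\bar\partial\psi=\psi w_-\chi_{\mathbb{C}^+}+\psi w_+\chi_{\mathbb{C}^-}$ for $x>0$, and the swapped expression for $x<0$. This is \eqref{dbknb1}, holding almost everywhere.

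\textbf{Normalization and the converse.} Theorem \ref{dbth1} shows that the $E_1^\pm$ transforms vanish as $k\to\infty$. For the $E_2^\pm$ pieces, $g_0^\mp(0)-g_0^\mp(k^{-1})\to0$ as $k\to\infty$ by the H\"older continuity \eqref{dbc8}. Hence $\psi\to I$.

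Conversely, suppose $\psi\in X$ solves \eqref{dbknb1} with $\psi\to I$. Set $\phi=\psi-I-\psi RT_\mathbb{C}$. Then $\bar\partial\phi=0$ in the distributional sense, so $\phi$ is entire by Weyl's lemma. It is also bounded and tends to $0$ at infinity, so Liouville's theorem gives $\phi\equiv0$. This establishes the equivalence with \eqref{dbc26}.

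The delicate step is the distributional $\bar\partial$ computation across the real axis and the unit circle, where the domain pieces meet. I would handle it by testing against $C_c^\infty$ functions and using Fubini, which is justified by the integrability established in Lemma \ref{dble2}.
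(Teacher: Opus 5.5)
Your proof is correct once the small-norm hypothesis you add is in force, but it is organized differently from the paper's proof and is more careful at several points.

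The paper's proof establishes only the equivalence. It gets Dbar $\Rightarrow$ \eqref{dbc26} from the Pompeiu formula with a ``circle of infinite radius'' and $\psi\to I$. It gets \eqref{dbc26} $\Rightarrow$ Dbar by applying Theorems \ref{dbth1} and \ref{dbth2} directly on the half-planes $\mathbb{C}^\pm$. Existence is never argued inside the proof; it rests on the small-norm condition \eqref{dbc32} stated just after the theorem. So your explicit Neumann series in $L_x^\infty(\mathbb{R},\mathbf{C}_k^\alpha(\mathbb{C}))$ under that hypothesis is exactly what the paper supports. Neither argument gives existence for arbitrary $r_\pm\in L^{q,2}$, and your Fredholm aside is not needed.

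For the equivalence, you replace the boundary term at infinity by Weyl's lemma plus Liouville, which uses only boundedness of $\psi$ and $\psi\to I$. You also compute $\bar\partial$ through the $E_1^\pm/E_2^\pm$ splitting and the inversion \eqref{dbc12}. This is what is actually required, because Theorems \ref{dbth1} and \ref{dbth2} are stated only for bounded $G$. The chain-rule factor $-\bar k^{-2}$ does cancel the weight $\bar z^{-2}$ of \eqref{dbc12}, as you claim. You also verify the normalization of the solution of \eqref{dbc26}, which the paper omits.

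One small correction: the decay $g_0^\mp(0)-g_0^\mp(k^{-1})\to0$ as $k\to\infty$ should be justified by Theorem \ref{dbth3} applied to $g_0^\mp$ on the bounded domain $E_1^\mp$, i.e.\ H\"older continuity in $\lambda=k^{-1}$ at $\lambda=0$. Citing \eqref{dbc8} does not work, since it is a H\"older bound in $k$ and says nothing about the limit at infinity.
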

Proof. It is verified that the Dbar equation \eqref{dbknb1} with normalization condition $\psi(k;x)\to I, n\to\infty$ gives to the integral equation \eqref{dbc26} by virtue of the generalized Cauchy integral formula or the socalled Pompeiu formula
\[\phi(k)=\frac{1}{2\pi i}\int_{C^\infty}\frac{\phi(z)}{z-k}dz+\frac{1}{2\pi i}\iint_\mathbb{C}\frac{\partial_{\bar{z}}\phi(z)}{z-k}dz\wedge d\bar{z},\]
where $C^\infty$ is a circle with an infinite radius.

Conversely, for $k\in \mathbb{C}^+$, since $\psi w_\pm T_{\mathbb{C}^-}(k;x)$ are holomorphic via the theorem \ref{dbth1}, then from the theorem \ref{dbth2}, we have
\[\begin{aligned}
\bar\partial(\psi RT_\mathbb{C}(k;x))&=\bar\partial(\psi w_-T_{\mathbb{C}^+}(k;x))\chi_{\{x>0\}}+\bar\partial(\psi w_+T_{\mathbb{C}^+}(k;x))\chi_{\{x<0\}}\\
&=\psi(k;x)w^-(k;x)\chi_{\{x>0\}}+\psi(k;x)w^+(k;x)\chi_{\{x<0\}}, \quad k\in \mathbb{C}^+.
\end{aligned}\]
Similarly, we have
\[\bar\partial(\psi RT_\mathbb{C}(k;x))=\psi(k;x)w^+(k;x)\chi_{\{x>0\}}+\psi(k;x)w^-(k;x)\chi_{\{x<0\}}, \quad k\in \mathbb{C}^-.\]
Thus, 
$\psi(k;x)$ given by \eqref{dbc26} is the solution of the Dbar equation \eqref{dbknb1}.
The theorem is proved. \qquad $\square$

For the positive constant $M^{p,q,\alpha}$ in \eqref{dbc28} related to the $p,q$ and $\alpha$, if let
\begin{equation}\label{dbc32}
M^{p,q,\alpha}<\frac{1}{\|r_+\|_{L_k^{q,2}(\mathbb{C})}+\|r_-\|_{L_k^{q,2}(\mathbb{C})}},
\end{equation}
then the inverse operator $(\mathcal{I}-RT_\mathbb{C})^{-1}$ on $L^\infty(\mathbb{R},\mathbf{C}_k^\alpha(\mathbb{C}))$ is exist.

\setcounter{equation}{0}
\section{Potential reconstruction of AKNS spectral problem}

Now we consider the Dbar problem relevant to the AKNS spectral problem,  that is the spectral transformation function $R(k;x)$ admits \eqref{gaa6}. Under the normalization condition, if $r_\pm \in L_k^{q,2}(\mathbb{C})$ admitting the condition \eqref{dbc32}, then $\psi(k;x)\in {L}_x^\infty(\mathbb{R},\mathbf{C}_k^{\alpha}(\mathbb{C}))$ can be written as
\begin{equation}\label{dbc27}
\psi(k;x)=I(\mathcal{I}-RT_\mathbb{C})^{-1}(k;x),
\end{equation}

To construct the AKNS spectral problem, we consider the following properties.

\begin{lemma}\label{dbl4}
Let $r_\pm\in L_k^{q,2}(\mathbb{C})$ and $\psi_{11},\psi_{22}\in L_x^\infty(\mathbb{R},C_k^{\alpha}(\mathbb{C}))$, where $\psi=(\psi_{mn})_{2\times2}$. Let
\[\langle\psi_{11} r_+\rangle_\pm=\frac{1}{2\pi i}\iint_{\mathbb{C}^\pm}\psi_{11}(k;x)r_+(k)\mathrm{e}^{-2ikx}\mathrm{d}k\wedge\mathrm{d}\bar{k}~\chi_{\{x\lessgtr0\}},\]
\[\langle\psi_{22} r_-\rangle_\pm=\frac{1}{2\pi i}\iint_{\mathbb{C}^\pm}\psi_{22}(k;x)r_-(k)\mathrm{e}^{2ikx}\mathrm{d}k\wedge\mathrm{d}\bar{k}~\chi_{\{x\gtrless0\}},\]
then
\begin{equation}\label{dbd8}
|\langle\psi_{11} r_+\rangle_\pm|\leq M_{\pm,12}^{p,q,\alpha}\sup\limits_{x\lessgtr0}\|\psi_{11}\|_{C_k^{\alpha}(\mathbb{C}^\pm)}\|r_+\|_{L_k^{q,2}(\mathbb{C}^\pm)},
\end{equation}
\begin{equation}\label{dbd9}
|\langle\psi_{22} r_-\rangle_\pm|\leq M_{\pm,13}^{p,q,\alpha}\sup\limits_{x\gtrless0}\|\psi_{22}\|_{C_k^{\alpha}(\mathbb{C}^\pm)}\|r_-\|_{L_k^{q,2}(\mathbb{C}^\pm)},
\end{equation}
where $\frac{1}{p}+\frac{1}{q}<\frac{1}{2}$, and $\alpha=1-2\big(\frac{1}{p}+\frac{1}{q}\big)$.

\end{lemma}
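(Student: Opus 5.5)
The plan is to imitate the proof of Lemma \ref{dble2}: split the half-plane $\mathbb{C}^\pm$ into the bounded piece $E_1^\pm$ and the exterior piece $E_2^\pm=\{|k|\ge1,\ {\rm Im}k\gtrless0\}$, and map the exterior piece back into $E_1^\mp$ by $k\to k^{-1}$. The first thing to check is that the cut-offs in the statement make the exponentials harmless. We have $|\mathrm{e}^{-2ikx}|=\mathrm{e}^{2x{\rm Im}k}$, which is at most $1$ for ${\rm Im}k>0,\ x<0$ and for ${\rm Im}k<0,\ x>0$; this is exactly the pairing $\mathbb{C}^\pm$ with $\chi_{\{x\lessgtr0\}}$ in $\langle\psi_{11}r_+\rangle_\pm$. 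Likewise $|\mathrm{e}^{2ikx}|=\mathrm{e}^{-2x{\rm Im}k}\le1$ on $\mathbb{C}^\pm$ with $x\gtrless0$, which matches $\langle\psi_{22}r_-\rangle_\pm$. So in every case the exponential factor is bounded by $1$ and can be dropped, uniformly in $x$. I only write out \eqref{dbd8}; \eqref{dbd9} is identical with $r_-,\psi_{22}$ and the opposite sign of $x$.

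On $E_1^\pm$ I would bound crudely, using $|\mathrm{d}k\wedge\mathrm{d}\bar k|=2\,\mathrm{d}\sigma_k$ and H\"older with exponents $q,q'$:
\[
\Big|\frac{1}{2\pi i}\iint_{E_1^\pm}\psi_{11}r_+\mathrm{e}^{-2ikx}\,\mathrm{d}k\wedge\mathrm{d}\bar k\Big|\le\frac1\pi\|\psi_{11}\|_{L^\infty(E_1^\pm)}\,|E_1^\pm|^{1/q'}\,\|r_+\|_{L^q(E_1^\pm)} .
\]
The sup-norm of $\psi_{11}$ is controlled by $\|\psi_{11}\|_{C^\alpha_k}$, and then I take the supremum over $x\lessgtr0$. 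Alternatively one can pass through $L^p$ using \eqref{dbc23}, which keeps the $p$-dependence of the constant $M^{p,q,\alpha}_{\pm,12}$ visible.

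On $E_2^\pm$ I substitute $k=z^{-1}$, with $z\in E_1^\mp$. Then $\mathrm{d}k\wedge\mathrm{d}\bar k=|z|^{-4}\mathrm{d}z\wedge\mathrm{d}\bar z$, and in the notation \eqref{dba1} the integrand becomes
\[
\psi_{11,(0)}(z)\,r_{+,(2)}(z)\,|z|^{-2}\,\mathrm{e}^{-2iz^{-1}x}.
\]
The exponential is still bounded by $1$ on the range of $x$ under consideration. The main obstacle is the leftover weight $|z|^{-2}$. If we only know $r_{+,(2)}\in L^q(E_1^\mp)$ and that $\psi_{11,(0)}$ is bounded, H\"older fails, because $|z|^{-2}\notin L^{q'}(E_1)$ for any $q'\ge1$. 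Equivalently, $r_+\in L^{q,2}$ gives $r_+\in L^q$ near infinity without any decay that would make $r_+$ integrable there.

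My first attempt at this step is to avoid integrating $\psi_{11}r_+$ against $1$ at all. Instead I would read $\langle\psi_{11}r_+\rangle_\pm$ as the $k^{-1}$ coefficient of the Cauchy--Green transform:
\[
\langle\psi_{11}r_+\rangle_\pm=-\lim_{k\to\infty}k\,(\psi_{11}r_+\mathrm{e}^{-2ikx})T_{\mathbb{C}^\pm}(k).
\]
Through the representation \eqref{dbc12}, $g_2=g_0(0)-g_0(k^{-1})$, this limit is the difference quotient $\big(g_0(\lambda)-g_0(0)\big)/\lambda$ as $\lambda\to0$. By the estimate \eqref{dbc15}, this quotient is bounded by $\frac1\pi\iint_{E_1^\mp}|\psi_{11,(0)}||r_{+,(2)}|\,|z|^{-1}|z-\lambda|^{-1}\mathrm{d}\sigma_z$. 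That weight is only borderline, and for fixed $\lambda$ it lies in $L^{q'}$ by Lemma \ref{dble1}, case $\mu+\nu<2$ in the exponent $q'$, since $2q'<2$ fails only in the limit. So I would aim to obtain uniformity as $\lambda\to0$ from the H\"older regularity of $\psi_{11,(0)}$, together with a slightly stronger weighted-integrability reading of $r_{+,(2)}$.

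If this does not close, the fallback is to require $\psi_{11}-1$ to vanish at $\infty$ at H\"older rate and to treat the constant part $\iint r_+$ separately. I expect this step to be the only real difficulty; the other pieces are routine H\"older estimates together with Theorem \ref{dbth3}.
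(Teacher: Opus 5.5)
Your diagnosis of the exterior piece is correct, and the paper's own proof breaks down at exactly this point. The paper bounds $A_2^\pm$ by $\frac1\pi\iint_{E_1^\mp}|\psi_{(0),11}||r_{(2),+}||k|^{-2}\mathrm{d}\sigma_k$. It then applies H\"older with $\frac1\mu+\frac1\nu=1$, $\mu>2$, asserting $0<\nu<1$ and $\iint_{E_1^\mp}|k|^{-2\nu}\mathrm{d}\sigma_k=\frac{\pi}{2-2\nu}$. But $\mu>2$ forces $1<\nu<2$, so that integral is infinite; this is precisely your observation that $|z|^{-2}\notin L^{q'}(E_1)$.

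Your proposal does not close the step either. In the difference-quotient rescue, $|z|^{-q'}|z-\lambda|^{-q'}$ has total exponent $2q'>2$. So Lemma \ref{dble1} applies in the case $\mu+\nu>2$, not $\mu+\nu<2$, and yields only $\big\||z|^{-1}|z-\lambda|^{-1}\big\|_{L^{q'}(E_1)}\lesssim|\lambda|^{-2/q}$, which blows up as $\lambda\to0$. No H\"older regularity of $\psi_{11,(0)}$ can compensate. Your fallback of isolating the constant part $\iint r_+\mathrm{e}^{-2ikx}$ meets the same divergence.

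In fact no argument can work, because the statement is false. Take $\psi_{11}\equiv1$ and $r_+(k)=|k|^{-2}$ for $|k|\ge1$, ${\rm Im}\,k>0$, with $r_+=0$ elsewhere.

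\begin{itemize}
\item The right side is finite: $r_+=0$ on $E_1^+$ and $r_{+,(2)}=\chi_{E_1^-}$, so $\|r_+\|_{L^{q,2}(\mathbb{C}^+)}=(\pi/2)^{1/q}$.
\item For $x<0$ write $k=a+ib$ and use $\frac{1}{2\pi i}\mathrm{d}k\wedge\mathrm{d}\bar k=-\frac1\pi\mathrm{d}\sigma_k$ and $\int_{\mathbb{R}}\frac{\mathrm{e}^{2ia|x|}}{a^2+b^2}\mathrm{d}a=\frac{\pi}{b}\mathrm{e}^{-2b|x|}$. The strip $b\ge1$ then contributes $-\int_1^\infty b^{-1}\mathrm{e}^{-4b|x|}\mathrm{d}b\sim-\ln\frac{1}{|x|}$ to $\langle\psi_{11}r_+\rangle_+$.
\item The region $\{0<b<1,\ |k|\ge1\}$ contributes at most $\frac1\pi\int_0^1\frac{2\arcsin b}{b}\mathrm{d}b$ in modulus, uniformly in $x$.
\end{itemize}

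Hence $|\langle\psi_{11}r_+\rangle_+(x)|\to\infty$ as $x\to0^-$, and no constant $M^{p,q,\alpha}_{+,12}$ exists.

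The structural reason is the one you spotted. In Lemma \ref{dble2}, under $k\to k^{-1}$ the Cauchy kernel supplies a factor $z^{2}$ that offsets half of the Jacobian $|z|^{-4}$, leaving the harmless kernel $\frac1z-\frac{1}{z-k^{-1}}$. The bare moment $\iint\psi w_\mp$ has no such factor and keeps the weight $|z|^{-2}$. Equivalently, it needs $r_\pm$ integrable at infinity, which $L^{q,2}$ does not give, and the exponential loses its decay as $x\to0$.

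The lemma therefore needs an extra hypothesis, and so do the $L^\infty$ bounds on $u,v$ and the Lipschitz corollary that rest on it. Two options are $r_\pm\in L^1(\mathbb{C})$, or $r_\pm\in L^{q,\nu}(\mathbb{C})$ with $\nu>2+\frac2q$. In the latter case the exterior piece is at most $\frac1\pi\sup|\psi_{11}|\,\|r_{+,(\nu)}\|_{L^q(E_1^\mp)}\,\big\||z|^{\nu-4}\big\|_{L^{q'}(E_1^\mp)}$. This is finite since $(4-\nu)q'<2$, and your splitting then closes at once without using the exponential. That is the ``stronger weighted integrability'' you were reaching for, but it must be assumed, not derived.
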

{\bf Proof}. We only give the proof of the condition \eqref{dbd8}. To this end, we give the following decomposition
\[\begin{aligned}
\langle\psi_{11} r_+\rangle_\pm&=\frac{1}{2\pi i}\iint_{E_1^\pm}\psi_{11}(k;x)r_+(k)\mathrm{e}^{-2ikx}\mathrm{d}k\wedge \mathrm{d}\bar{k}~\chi_{\{x\lessgtr0\}}\\
&\quad+\frac{1}{2\pi i}\iint_{E_2^\pm}\psi_{11}(k;x)r_+(k)\mathrm{e}^{-2ikx}\mathrm{d}k\wedge \mathrm{d}\bar{k}~\chi_{\{x\lessgtr0\}}\\
&=A_1^\pm+A_2^\pm.
\end{aligned}\]
Then using the H\"older inequality, we find that
\[\begin{aligned}
|A_1^\pm|&\leq\frac{1}{\pi}\iint_{E_1^\pm}\sup\limits_{x\lessgtr0}|\psi_{11}(k;x)||r_+(k)|\mathrm{d}\sigma_k\\
&\leq\frac{1}{\pi}\left(\iint_{E_1^\pm}(\sup\limits_{x\lessgtr0}|\psi_{11}(k;x)||r_+(k)|)^\mu\mathrm{d}\sigma_k\right)^{\frac{1}{\mu}}
\left(\iint_{E_1^\pm}\mathrm{d}\sigma_k\right)^{\frac{1}{\nu}}\\
&\leq\frac{1}{\pi}(\frac{\pi}{2})^{\frac{1}{\nu}}\sup\limits_{x\lessgtr0}\|\psi_{11}(k;x)\|_{L_k^p(E_1^\pm)}\|r_+\|_{L_k^q(E_1^\pm)},
\end{aligned}\]
where $\frac{1}{\mu}+\frac{1}{\nu}=1, \mu>2$ and $\frac{1}{\mu}=\frac{1}{p}+\frac{1}{q}$. In addition, for $A_2^\pm$, we have
\[\begin{aligned}
A_2^\pm&=\frac{1}{2\pi i}\iint_{E_2^\pm}\psi_{11}(k;x)r_+(k)\mathrm{e}^{-2ikx}\mathrm{d}k\wedge \mathrm{d}\bar{k}~\chi_{\{x\lessgtr0\}}\\
&=\frac{1}{2\pi i}\iint_{E_1^\mp}\psi_{11}(k^{-1};x)r_+(k^{-1})\mathrm{e}^{-2ik^{-1}x}k^{-2}\bar{k}^{-2}\mathrm{d}k\wedge \mathrm{d}\bar{k}~\chi_{\{x\lessgtr0\}}.
\end{aligned}\]
Similarly, we obtain that
\[\begin{aligned}
|A_2^\pm|&\leq\frac{1}{\pi}\iint_{E_1^\mp}\sup\limits_{x\lessgtr0}|\psi_{(0),11}(k;x)||r_{(2),+}(k)||k|^{-2}\mathrm{d}\sigma_k\\
&\leq\frac{1}{\pi}\left(\iint_{E_1^\mp}\big(\sup\limits_{x\lessgtr0}|\psi_{(0),11}(k;x)||r_{(2),+}(k)|\big)^\mu\mathrm{d}\sigma_k\right)^{\frac{1}{\nu}}
\left(\iint_{E_1^\mp}|k|^{-2\nu}\mathrm{d}\sigma_k\right)^{\frac{1}{\nu}},
\end{aligned}\]
where $\frac{1}{\mu}+\frac{1}{\nu}=1, \mu>2$, then $0<\nu<1$.
Since
\[\iint_{E_1^\mp}|k|^{-2\nu}\mathrm{d}\sigma_k=\pi\int_0^1r^{1-2\nu}\mathrm{d}r=\frac{\pi}{2-2\nu},\]
then
\[|A_2^\pm|\leq\frac{1}{2-2\nu}\sup\limits_{x\lessgtr0}\|\psi_{(0),11}(k;x)\|_{L_k^p(E_1^\mp)}\|r_{(2),+}\|_{L_k^q(E_1^\mp)}.\]
Thus, we have
\[\begin{aligned}
|\langle\psi_{11} r_+\rangle_\pm|&\leq M_{12}^{p,q}(\sup\limits_{x\lessgtr0}\|\psi_{11}(k;x)\|_{L_k^p(E_1^\pm)}\|r_+\|_{L_k^q(E_1^\pm)}\\
&\quad +\sup\limits_{x\lessgtr0}\|\psi_{(0),11}\|_{L_k^p(E_1^\mp)}\|r_{(2),+}\|_{L_k^q(E_1^\mp)})\\
&\leq M_{12}^{p,q}(\sup\limits_{x\lessgtr0}\|\psi_{11}\|_{L_k^p(E_1^\pm)}+\sup\limits_{x\lessgtr0}\|\psi_{(0),11}\|_{L_k^p(E_1^\mp)})\\
&\quad\times(\|r_+\|_{L_k^q(E_1^\pm)}+\|r_{(2),+}\|_{L_k^q(E_1^\mp)})),
\end{aligned}\]
which proves the condition \eqref{dbd8} by using the condition
\[\sup\limits_{x\lessgtr0}\|\psi_{11}\|_{L_k^{p,0}(\mathbb{C}^\pm)}\leq M_\pm(p,q,\alpha) \sup\limits_{x\lessgtr0}\|\psi_{11}\|_{C_k^{\alpha}(\mathbb{C}^\pm)}.\]
The condition \eqref{dbd9} can be proved similarly. \qquad $\square$

\begin{theorem}\label{dbth5}
If $\psi\in C_x^1(\mathbb{R})\cap {L}_x^\infty(\mathbb{R},\mathbf{C}_k^{\alpha}(\mathbb{C}))$, and $r_\pm \in L_k^{q,2}(\mathbb{C})$,
$\frac{1}{p}+\frac{1}{q}<\frac{1}{2}$, and $\alpha=1-2\big(\frac{1}{p}+\frac{1}{q}\big)$,
then AKNS spectral problem can be obtained
\begin{equation}\label{dbd1}
\partial_x\psi(k;x)=-ik[\sigma_3,\psi(k;x)]+Q(x)\psi(k;x),
\end{equation}
where the potential matrix $Q$ is given by the spectral transformation matrix $R$ and the eigenfunction $\psi$ as following
\begin{equation}\label{dbd2}
Q=-i[\sigma_3,\langle \psi R\rangle]:=\left(\begin{matrix}
0&u\\
v&0
\end{matrix}\right),
\end{equation}
with
\[\langle \psi R\rangle=\langle \psi w_-\rangle_++\langle \psi w_+\rangle_-+\langle \psi w_+\rangle_++\langle \psi w_-\rangle_-. \]
Here
\begin{equation}\label{dbd7}
\begin{aligned}
\langle \psi w_\mp\rangle_\pm=\frac{1}{2\pi i}\iint_{\mathbb{C}^\pm}\psi(k;x)w_\mp(k;x)\mathrm{d}k\wedge\mathrm{d}\bar{k}~\chi_{\{x>0\}},\\
\langle \psi w_\pm\rangle_\pm=\frac{1}{2\pi i}\iint_{\mathbb{C}^\pm}\psi(k;x)w_\pm(k;x)\mathrm{d}k\wedge\mathrm{d}\bar{k}~\chi_{\{x<0\}}.
\end{aligned}
\end{equation}
\end{theorem}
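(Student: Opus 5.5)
This is the standard $\bar\partial$-dressing argument, adapted to the split operator \eqref{dbc5}. The idea is to show that
\[
\Phi:=\partial_x\psi+ik[\sigma_3,\psi]-Q\psi
\]
satisfies the same Dbar equation \eqref{dbknb1} as $\psi$, but tends to zero at infinity. Uniqueness, which follows from the small norm condition \eqref{dbc32}, then forces $\Phi\equiv0$.

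\textbf{Step 1: $\Phi$ satisfies the Dbar equation.} On each half plane, $\bar\partial\psi=\psi W$, where $W$ is the appropriate nilpotent piece: for example $W=w_-\chi_{\{x>0\}}+w_+\chi_{\{x<0\}}$ for $k\in\mathbb{C}^+$. Each piece satisfies $\partial_xw_\pm=-ik[\sigma_3,w_\pm]$, because $w_\pm$ inherits this relation from \eqref{gaa5}. Fix $x\neq0$, so that the indicator functions are locally constant in $x$. Differentiating $\bar\partial\psi=\psi W$ gives
\[
\bar\partial(\partial_x\psi)=(\partial_x\psi)W-ik\psi[\sigma_3,W].
\]
Since $\bar\partial$ acts only in $k$, we also have
\[
\bar\partial(ik[\sigma_3,\psi])=ik[\sigma_3,\psi W]=ik[\sigma_3,\psi]W+ik\psi[\sigma_3,W].
\]
Moreover $\bar\partial(Q\psi)=Q\psi W$, because $Q$ does not depend on $k$. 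Adding these three identities, the $\pm ik\psi[\sigma_3,W]$ terms cancel, and we get $\bar\partial\Phi=\Phi W$ on each half plane. So $\Phi$ solves \eqref{dbknb1}.

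\textbf{Step 2: asymptotics of $\psi$ and the choice of $Q$.} Expand the kernel in the integral equation \eqref{dbc26} using $\frac{1}{z-k}=-\frac1k-\frac{z}{k(z-k)}$. This gives
\[
\psi=I-\frac1k\langle\psi R\rangle+o(k^{-1}),\qquad k\to\infty,
\]
where $\langle\psi R\rangle$ collects the four half-plane integrals in \eqref{dbd7}, each taken with its correct $x$-indicator. By Lemma \ref{dbl4}, together with the analogous bounds for the off-diagonal entries $\psi_{12},\psi_{21}$, these integrals are finite. So $\langle\psi R\rangle$ is well defined and bounded in $x$.

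The remainder term requires a weighted estimate of the form $\iint |z||\psi R|/|z-k|\to0$. I would obtain it from the $L^{q,2}$ condition, which after inversion controls $|z|^{-2}r_\pm(z^{-1})$, by rerunning the $E_1/E_2$ splitting used in Lemma \ref{dble2}, or by a dominated convergence argument.

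Now substitute the expansion into $\Phi$:
\[
ik[\sigma_3,\psi]\to -i[\sigma_3,\langle\psi R\rangle],\qquad Q\psi\to Q.
\]
Hence $\Phi\to -i[\sigma_3,\langle\psi R\rangle]-Q+\lim\partial_x\psi$. With the choice \eqref{dbd2}, the first two terms cancel. It remains to show $\partial_x\psi\to0$. For this I would differentiate \eqref{dbc26} in $x$, using $\psi\in C^1_x$, and apply Theorem \ref{dbth4} to $\partial_x\psi$ together with the $x$-derivative of the kernel. This yields $\partial_x\psi=(\partial_x\psi) RT_{\mathbb{C}}+\dots$, and the right-hand side vanishes as $k\to\infty$, since every Cauchy--Green integral of the type treated in Theorem \ref{dbth1} vanishes at infinity.

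\textbf{Step 3: uniqueness.} Since $\Phi\to0$ and $\Phi$ satisfies \eqref{dbknb1}, the Pompeiu formula argument from the preceding theorem gives $\Phi=\Phi RT_{\mathbb{C}}$. Under \eqref{dbc32} the operator $\mathcal{I}-RT_{\mathbb{C}}$ is invertible on $L^\infty_x(\mathbb{R},\mathbf{C}^\alpha_k(\mathbb{C}))$, so $\Phi=0$, which is exactly \eqref{dbd1}. Because $\psi$ is continuous in $x$ and the identity holds for all $x\neq0$, it extends to $x=0$. Finally, $R$ is off-diagonal, so $[\sigma_3,\langle\psi R\rangle]$ is off-diagonal, and $Q$ has the stated form with entries $u$ and $v$.

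\textbf{Main obstacle.} The hard part is Step 2: justifying the $1/k$ expansion and the decay of $\partial_x\psi$ at infinity. The difficulty is that differentiating in $x$ brings down a factor $k$ from $e^{\pm2ikx}$, which destroys the integrability we have. I would first try to avoid differentiating the kernel altogether. One way is to set $\tilde\psi=\psi e^{-ikx\sigma_3}$, so that the jump becomes $x$-independent. The other is to work with the difference quotient
\[
\frac{\psi(k;x+h)-\psi(k;x)}{h},
\]
staying on one side of $x=0$ so that the indicator structure is fixed, and estimating it through Theorem \ref{dbth4}.
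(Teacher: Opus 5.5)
Your argument is sound in structure, and it takes a genuinely different route from the paper.

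\textbf{Comparison of routes.} The paper never forms $\Phi$ and never expands at $k=\infty$. It differentiates \eqref{dbc26}, combines the resolvent identity \eqref{dbd5} with the exact moment identity \eqref{dbda7}, \eqref{dbd4}, namely $(k\psi)RT_{\mathbb C}=k\,(\psi RT_{\mathbb C})+\langle\psi R\rangle$, and reads off $Q=-i[\sigma_3,\langle\psi R\rangle]$ algebraically. This needs neither your $1/k$ expansion nor $\partial_x\psi\to0$. The cost is that the paper applies $(\mathcal I-RT_{\mathbb C})^{-1}$ to functions growing like $k$ (e.g.\ $kI$ after \eqref{dbd6}). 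These lie outside $\mathbf C^\alpha_k(\mathbb C)$, so the paper tacitly uses invertibility beyond what \eqref{dbc32} provides. Your Liouville-type argument inverts only on the bounded function $\Phi$, which is exactly where the small-norm result applies, and the price is the asymptotics.

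You can combine the two advantages. At fixed $x\neq0$, differentiating \eqref{dbc26} gives $\partial_x\psi=(\partial_x\psi)RT_{\mathbb C}-i(k\psi\sigma_3)RT_{\mathbb C}+i\big((k\psi)RT_{\mathbb C}\big)\sigma_3$. Insert \eqref{dbda7}, use $\psi-I=\psi RT_{\mathbb C}$, and make the choice \eqref{dbd2}. This yields the exact identity $\Phi=\Phi RT_{\mathbb C}$, so your Step 1, the Pompeiu argument, and the decay of $\partial_x\psi$ all become unnecessary. Uniqueness then only requires $\Phi(\cdot;x)$ to be bounded: Theorem \ref{dbth4} places $\Phi=\Phi RT_{\mathbb C}$ in $\mathbf C^\alpha_k(\mathbb C)$, and \eqref{dbc28} with \eqref{dbc32} forces $\Phi=0$.

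\textbf{The open analytic input.} What your write-up leaves open, and what the paper only asserts, is the following. For fixed $x\neq0$, the $k$-weighted transform $(k\psi)RT_{\mathbb C}(\cdot;x)$ must converge and be bounded; equivalently, $\langle\psi R\rangle$ must converge. The same is needed for $\partial_x\psi$, which also justifies differentiating under the integral sign.

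Your proposed source for this will not work. The $E_1/E_2$ splitting with H\"older's inequality discards the exponentials. After $z\mapsto z^{-1}$ the exterior contributes $\iint_{E_1}|\zeta|^{-2}|r_{(2),\pm}(\zeta)|\,\mathrm d\sigma_\zeta$, and $|\zeta|^{-2}\notin L^{q'}(E_1)$. For instance, $r=|z|^{-2}\chi_{\{|z|>1\}}$ lies in $L^{q,2}(\mathbb C)$ but is not integrable.

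You must instead use the decay of $\mathrm e^{\pm2ikx}$ on the half plane where each $w_\pm$ is integrated. The resulting bounds blow up as $x\to0$; for the example above the moment itself grows like $\log(1/|x|)$. Lemma \ref{dbl4}, which you cite for finiteness and boundedness in $x$, has the same defect. Its exterior estimate uses $\iint_{E_1}|k|^{-2\nu}\,\mathrm d\sigma_k$ with $\nu=\mu/(\mu-1)>1$, and that integral diverges.

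\textbf{Smaller points.}
\begin{enumerate}
\item The correct expansion is $\frac1{z-k}=-\frac1k+\frac{z}{k(z-k)}$. Your sign on the second term is wrong, though this is harmless for the leading term.
\item $[\sigma_3,A]$ is off-diagonal for every $2\times2$ matrix $A$, so this does not follow from $R$ being off-diagonal. In fact $\langle\psi R\rangle$ itself has nonzero diagonal entries, coming from $\psi_{12}r_-$ and $\psi_{21}r_+$.
\item At $x=0$ both indicators vanish, so $\psi(\cdot;0)=I$ and $Q(0)=0$ by \eqref{dbd7}. Your extension to $x=0$ therefore holds only if you redefine $Q(0)$ as $\lim_{x\to0}Q(x)$, which exists under the $C^1_x$ hypothesis.
\end{enumerate}
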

{\bf Proof}. If $\psi\in {L}_x^\infty(\mathbb{R},\mathbf{C}_k^{\alpha}(\mathbb{C}))$, and $r_\pm \in L_k^{q,2}(\mathbb{C})$, then from lemma \ref{dbl4}, we know that $k\psi w_\mp T_{\mathbb{C}^\pm}(k;x)\chi_{\{x>0\}}$ and $k\psi w_\pm T_{\mathbb{C}^\pm}(k;x)\chi_{\{x<0\}}$ are well-defined, in view of that
\begin{equation}\label{dbda7}
\begin{aligned}
k\psi w_\mp T_{\mathbb{C}^\pm}(k;x)\chi_{\{x>0\}}&=\frac{1}{2i\pi}\iint_{\mathbb{C}^\pm}\frac{z\psi(z;x)w_\mp(z;x)}{z-k}\mathrm{d}z\wedge\mathrm{d}\bar{z}~\chi_{\{x>0\}}\\
&=k[\psi w_\mp T_{\mathbb{C}^\pm}(k;x)\chi_{\{x>0\}}]+\langle \psi w_\mp\rangle_\pm,
\end{aligned}
\end{equation}
and
\begin{equation}\label{dbdb7}
k\psi w_\pm T_{\mathbb{C}^\pm}(k;x)\chi_{\{x<0\}}=k[\psi w_\pm T_{\mathbb{C}^\pm}(k;x)\chi_{\{x>0\}}]+\langle \psi w_\pm\rangle_\pm.
\end{equation}

Since $\psi\in C_x^1(\mathbb{R})$, then differentiating of equation \eqref{dbc26} with respect to $x$, and using \eqref{dbc27}, \eqref{gaa5}, we find that
\begin{equation}\label{dbd3}
\partial_x\psi=-ik\psi \sigma_3RT_\mathbb{C}(I-RT_\mathbb{C})^{-1}+ik\psi RT_\mathbb{C}\sigma_3(I-RT_\mathbb{C})^{-1}.
\end{equation}
For the first term on the right, since $RT_\mathbb{C}=\mathcal{I}-I(\mathcal{I}-RT_\mathbb{C})$, then \cite{ijmpb4-1003}
\begin{equation}\label{dbd5}
RT_\mathbb{C}(\mathcal{I}-RT_\mathbb{C})^{-1}=(\mathcal{I}-RT_\mathbb{C})^{-1}-\mathcal{I}.
\end{equation}
For the second term on the right of \eqref{dbd3}, we know, from \eqref{dbda7}, \eqref{dbdb7} and \eqref{dbc5}, that
\[\begin{aligned}
k\psi RT_\mathbb{C}=&[k\psi w_-T_{\mathbb{C}^+}(k;x)+k\psi w_+T_{\mathbb{C}^-}(k;x)]\chi_{\{x>0\}}\\
 &+[k\psi w_-T_{\mathbb{C}^-}(k;x)+k\psi w_+T_{\mathbb{C}^+}(k;x)]\chi_{\{x<0\}}\\
 &=k[\psi RT_\mathbb{C}]+\langle \psi R\rangle,
\end{aligned}\]
which can be reduced to
\begin{equation}\label{dbd4}
k\psi RT_\mathbb{C}=k\psi-kI+\langle \psi R\rangle,
\end{equation}
in terms of \eqref{dbc26}.

Substituting \eqref{dbd5} and \eqref{dbd4} into \eqref{dbd3}, and using \eqref{dbc27}, we can get that
\begin{equation}\label{dbd6}
\partial_x\psi=ik\psi \sigma_3+i\langle \psi R\rangle \sigma_3\psi-i\sigma_3k(I-RC_k)^{-1}.
\end{equation}
In addition, from \eqref{dbd4} and \eqref{dbc27}, we obtain that
\[k(I-RC_k)^{-1}=k\psi+\langle \psi R\rangle(I-RC_k)^{-1}=k\psi+\langle \psi R\rangle\psi.\]
It is remarked that $\langle \psi R\rangle$ is independent of $k$, so the last term can be obtained via \eqref{dbc27}.
The proof of the theorem is finished. \qquad $\square$

\begin{theorem}
Let $r_\pm\in L_k^{q,2}(\mathbb{C})$, $\frac{1}{p}+\frac{1}{q}<\frac{1}{2}$, and $\alpha=1-2\big(\frac{1}{p}+\frac{1}{q}\big)$,
then the potentials $u$ and $v$ defined in \eqref{dbd2} satisfy the conditions
\begin{equation}\label{dbd10}
\begin{aligned}
|u(x)|\leq \frac{M_{14}^{p,q}\|r_+\|_{L_k^{q,2}(\mathbb{C})}}{1-M_{16}^{p,q}(\|r_+\|_{L_k^{q,2}(\mathbb{C})}+\|r_-\|_{L_k^{q,2}(\mathbb{C})})},\\
|v(x)|\leq \frac{M_{15}^{p,q}\|r_-\|_{L_k^{q,2}(\mathbb{C})}}{1-M_{16}^{p,q}(\|r_+\|_{L_k^{q,2}(\mathbb{C})}+\|r_-\|_{L_k^{q,2}(\mathbb{C})})}.
\end{aligned}
\end{equation}
\end{theorem}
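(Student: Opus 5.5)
Compute $u,v$ explicitly from \eqref{dbd2}. Since $\sigma_3$ is diagonal and $R$ is off-diagonal, $-i[\sigma_3,\langle\psi R\rangle]$ only sees the off-diagonal entries of $\langle\psi R\rangle$. The $(1,2)$ entry of $\psi w_+$ is $\psi_{11}r_+\mathrm{e}^{-2ikx}$, and the $(2,1)$ entry of $\psi w_-$ is $\psi_{22}r_-\mathrm{e}^{2ikx}$. Hence
\[
u(x)=-2i\big(\langle\psi_{11}r_+\rangle_++\langle\psi_{11}r_+\rangle_-\big),\qquad v(x)=2i\big(\langle\psi_{22}r_-\rangle_++\langle\psi_{22}r_-\rangle_-\big),
\]
in the notation of Lemma \ref{dbl4}. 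The indicator functions match that lemma: $w_+$ is integrated over $\mathbb{C}^-$ for $x>0$ and over $\mathbb{C}^+$ for $x<0$, so the exponentials are bounded.

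Applying \eqref{dbd8}, \eqref{dbd9} and the triangle inequality gives
\[
|u(x)|\le 2\big(M_{+,12}^{p,q,\alpha}+M_{-,12}^{p,q,\alpha}\big)\|r_+\|_{L_k^{q,2}(\mathbb{C})}\sup_{x\in\mathbb{R}}\|\psi_{11}\|_{C_k^\alpha(\mathbb{C})},
\]
and similarly for $v$ with $r_-$ and $\psi_{22}$. Here I use $\|r_+\|_{L^{q,2}(\mathbb{C}^\pm)}\le\|r_+\|_{L^{q,2}(\mathbb{C})}$, as in the proof of Theorem \ref{dbth4}. This produces the numerators $M_{14}^{p,q}\|r_+\|$ and $M_{15}^{p,q}\|r_-\|$, provided $\sup_x\|\psi\|_{\mathbf{C}_k^\alpha(\mathbb{C})}$ is bounded by the reciprocal of the stated denominator.

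To bound $\psi$, use the integral equation \eqref{dbc26}, $\psi=I+\psi RT_\mathbb{C}$, together with Theorem \ref{dbco1}. Set $M_{16}^{p,q}:=M^{p,q,\alpha}$ from \eqref{dbc28}, and assume the small-norm condition \eqref{dbc32}, i.e. $M_{16}^{p,q}(\|r_+\|+\|r_-\|)<1$. This is implicit in the statement, since otherwise the denominator is nonpositive. Then
\[
\|\psi\|_{\mathbf{C}_k^\alpha}\le\|I\|_{\mathbf{C}_k^\alpha}+M_{16}^{p,q}(\|r_+\|+\|r_-\|)\|\psi\|_{\mathbf{C}_k^\alpha},
\]
uniformly in $x$. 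Absorbing the last term gives $\sup_x\|\psi\|_{\mathbf{C}_k^\alpha}\le \|I\|_{\mathbf{C}_k^\alpha}/(1-M_{16}^{p,q}(\|r_+\|+\|r_-\|))$. Equivalently, expand $(\mathcal I-RT_\mathbb{C})^{-1}$ in a Neumann series in \eqref{dbc27}. Since $\|I\|_{\mathbf{C}_k^\alpha}$ is a fixed constant (the sup norm of the identity, with zero Hölder seminorm), it goes into $M_{14}^{p,q}$ and $M_{15}^{p,q}$. Combining with the previous paragraph yields \eqref{dbd10}.

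The main obstacle is a norm mismatch. Lemma \ref{dbl4} is really proved with $L_k^{p,0}$ norms of $\psi_{11}$ and $\psi_{22}$, and only at the end is converted to $C_k^\alpha$ via \eqref{dbc23}. The inverse operator, however, is controlled on $L_x^\infty(\mathbb{R},\mathbf{C}_k^\alpha(\mathbb{C}))$. One must therefore check that the entry norms are dominated by the matrix norm, which holds for any fixed matrix norm up to a constant, and that the conversion constant $M(p,\alpha)$ in \eqref{dbc23} is uniform in $x$. Both are routine, but they make the constants $M_{14},M_{15}$ depend on $\alpha$ as well.

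A further point is that $\psi$ is not a priori in $L^{p,0}$ near infinity, since $\psi\to I$. The $C^\alpha(\mathbb{C})\subset L^{p,0}$ inclusion is what handles this, through the $E_2^\pm\to E_1^\mp$ inversion; the weight $r_{(2),\pm}$ absorbs the Jacobian $|k|^{-4}$. I would cite this step explicitly rather than re-derive it.
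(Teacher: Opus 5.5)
Your reduction matches the paper's proof step for step. You write $u,v$ through $\langle\psi_{11}r_+\rangle_\pm$ and $\langle\psi_{22}r_-\rangle_\pm$, and you even keep the factors $-2i$ and $2i$ that the paper drops. You then apply \eqref{dbd8}--\eqref{dbd9} and bound $\sup_x\|\psi\|_{\mathbf{C}_k^\alpha}\le \|I\|_{\mathbf{C}_k^\alpha}/(1-M_{16}^{p,q}(\|r_+\|_{L_k^{q,2}}+\|r_-\|_{L_k^{q,2}}))$ by the Neumann series. The deduction from these two ingredients is fine.

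The gap is in the step you say you would cite rather than re-derive, and your justification of it is wrong. The weight $r_{(2),+}(k)=|k|^{-2}r_+(k^{-1})$ absorbs only $|k|^{-2}$ of the Jacobian $|k|^{-4}$. After the inversion $k\mapsto k^{-1}$, the exterior part $A_2^\pm$ of $\langle\psi_{11}r_+\rangle_\pm$ is controlled only by
\[
\frac1\pi\iint_{E_1^\mp}|\psi_{(0),11}(k;x)|\,|r_{(2),+}(k)|\,|k|^{-2}\,\mathrm{d}\sigma_k .
\]
Here $|k|^{-2}$ lies in no $L^{s}(E_1^\mp)$ with $s\ge1$. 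In particular it is not in $L^{\nu}$ for the H\"older conjugate $\nu=\mu/(\mu-1)\in(1,2)$ of $\mu>2$, so the paper's ``$0<\nu<1$'' in the proof of Lemma \ref{dbl4} is a slip. In Lemma \ref{dble2} the inverted Cauchy kernel $1/(z^{-1}-k)=z/(1-kz)$ supplies an extra factor $z$, leaving $1/(z(1-kz))\in L^{s}(E_1^\mp)$ for $s<2$; that is why weight $2$ suffices there. The moment $\langle\psi R\rangle$ has no kernel, so this factor is missing, and \eqref{dbd8} is actually false, not merely unproved.

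For a concrete counterexample, take $r_-=0$ and $r_+(k)=\epsilon|k|^{-2}\chi_{\{|k|\ge1\}}$. Then $r_{(2),+}\equiv\epsilon$ on $E_1$ and $\|r_+\|_{L^{q,2}(\mathbb{C})}=\epsilon\pi^{1/q}$. Since $w_-=0$ and $(\psi w_+)_{11}=0$, the $(1,1)$ entry of \eqref{dbc26} gives $\psi_{11}\equiv1$ exactly. Hence $u=-2i(\langle\psi_{11}r_+\rangle_++\langle\psi_{11}r_+\rangle_-)$ with $\psi_{11}=1$, and for $x<0$, in polar coordinates,
\[
u(x)=\frac{2i\epsilon}{\pi}\int_{|x|}^{\infty}\frac{F(s)}{s}\,\mathrm{d}s,\qquad F(s)=\int_0^{\pi}\mathrm{e}^{2is\mathrm{e}^{i\theta}}\,\mathrm{d}\theta ,
\]
where $F(s)\to\pi$ as $s\to0$ and $F(s)=O(s^{-1})$ as $s\to\infty$. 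Thus $|u(x)|\sim2\epsilon\ln(1/|x|)$ as $x\to0^-$, while the right-hand side of \eqref{dbd10} is finite for small $\epsilon$. No argument along these lines can prove the statement under $r_\pm\in L^{q,2}(\mathbb{C})$ alone; what is missing is extra decay of $r_\pm$ at infinity.

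If you assume in addition $r_\pm\in L^1(\mathbb{C})$, the exterior part is bounded by $\pi^{-1}\sup_{x,k}|\psi_{11}|\,\|r_+\|_{L^1(E_2^\pm)}$, using $|\mathrm{e}^{-2ikx}|\le1$ on the relevant half-plane. This assumption holds, for example, if $r_\pm\in L^{q,\nu}(\mathbb{C})$ with $\nu>2+2/q$. Your proof then goes through verbatim, with $\|r_\pm\|_{L^{q,2}}+\|r_\pm\|_{L^1}$ in the numerators.
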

{\bf Proof}. From \eqref{dbd2} and \eqref{dbd7}, we know that
\[\begin{aligned}
u=\langle\psi_{11} r_+\rangle_+ + \langle\psi_{11} r_+\rangle_-,\\
v=\langle\psi_{22} r_-\rangle_+ + \langle\psi_{22} r_-\rangle_-.
\end{aligned}\]
Then using the conditions \eqref{dbd8} and \eqref{dbd9}, we can get that
\begin{equation}\label{dbd11}
\begin{aligned}
|u(x)|\leq M_{14}^{p,q}\sup\limits_{x\in\mathbb{R}}\|\psi_{11}\|_{C_k^{\alpha}(\mathbb{C})}\|r_+\|_{L_k^{q,2}(\mathbb{C})},\\
|v(x)|\leq M_{15}^{p,q}\sup\limits_{x\in\mathbb{R}}\|\psi_{22}\|_{C_k^{\alpha}(\mathbb{C})}\|r_-\|_{L_k^{q,2}(\mathbb{C})}.
\end{aligned}
\end{equation}

We have proved that the inverse operator $(I-RT_\mathbb{C})^{-1}$ is exist based on the small norm condition. According to the identity \eqref{dbd5}, the inverse operator admits the relation
\[(\mathcal{I}-RT_\mathbb{C})^{-1}=\mathcal{I}+RT_\mathbb{C}(\mathcal{I}-RT_\mathbb{C})^{-1},\]
which maps the space $\mathbf{C}_k^\alpha(\mathbb{C})$ into itself. Thus, we find that
\[\begin{aligned}
\|(\mathcal{I}-RT_\mathbb{C})^{-1}\|_{\mathbf{C}_k^\alpha(\mathbb{C})\to \mathbf{C}_k^{\alpha}(\mathbb{C})}&\leq\frac{1}{1-\|RT_\mathbb{C}\|_{\mathbf{C}_k^{\alpha}(\mathbb{C})\to \mathbf{C}_k^\alpha(\mathbb{C})}}\\
&\leq
\frac{1}{1-M_{16}^{p,q}(\|r_+\|_{L_k^{q,2}(\mathbb{C})}+\|r_-\|_{L_k^{q,2}(\mathbb{C})})},
\end{aligned}\]
in terms of the condition \eqref{dbc28}.
In addition, the representation of $\psi(k;x)$ is given via the inverse operator in \eqref{dbc27}. Since $I\in C^\alpha(\mathbb{C})$, then we have
\begin{equation}\label{dbd12}
\begin{aligned}
\sup\limits_{x\in\mathbb{R}}\|\psi(k;x)\|_{\mathbf{C}_k^{\alpha}(\mathbb{C})}\leq \frac{1}{1-M_{16}^{p,q}(\|r_+\|_{L_k^{q,2}(\mathbb{C})}+\|r_-\|_{L_k^{q,2}(\mathbb{C})})}.
\end{aligned}
\end{equation}
Hence, the conditions \eqref{dbd10} can be proved via \eqref{dbd11} and \eqref{dbd12}. \qquad $\square$

\begin{corollary}
The map
\[L^{q,2}(\mathbb{C})\ni r_\pm(k)\to u(x),v(x)\in L^\infty(\mathbb{R})\]
is Lipschitz continuous.
\end{corollary}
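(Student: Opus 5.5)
Fix two pairs of data $r=(r_+,r_-)$ and $\tilde r=(\tilde r_+,\tilde r_-)$ in $L^{q,2}(\mathbb{C})$. Both lie in a ball where the small norm condition \eqref{dbc32} holds uniformly, say
\[M_{16}^{p,q}\big(\|r_+\|_{L_k^{q,2}(\mathbb{C})}+\|r_-\|_{L_k^{q,2}(\mathbb{C})}\big)\leq\delta<1,\]
and similarly for $\tilde r$. Let $R,\tilde R$, $\psi,\tilde\psi$ and $(u,v),(\tilde u,\tilde v)$ be the corresponding spectral matrices, eigenfunctions \eqref{dbc27} and potentials \eqref{dbd2}. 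The goal is
\[\sup_{x\in\mathbb{R}}|u(x)-\tilde u(x)|+\sup_{x\in\mathbb{R}}|v(x)-\tilde v(x)|\leq C_\delta\big(\|r_+-\tilde r_+\|_{L_k^{q,2}(\mathbb{C})}+\|r_--\tilde r_-\|_{L_k^{q,2}(\mathbb{C})}\big).\]
The structural fact used throughout is that every object is bilinear in $(\psi,r)$ and that $RT_\mathbb{C}$ is linear in $r_\pm$. By Theorem \ref{dbco1}, this gives
\[\|RT_\mathbb{C}-\tilde RT_\mathbb{C}\|_{\mathbf{C}_k^\alpha\to\mathbf{C}_k^\alpha}\leq M^{p,q,\alpha}\big(\|r_+-\tilde r_+\|_{L_k^{q,2}(\mathbb{C})}+\|r_--\tilde r_-\|_{L_k^{q,2}(\mathbb{C})}\big),\]
since the decomposition \eqref{dbc5} uses the same half-plane splitting for both data, so the difference operator is just the operator built from $r_\pm-\tilde r_\pm$.

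\textbf{Step 1: Lipschitz bound for the eigenfunction.} Subtract the two integral equations \eqref{dbc26}:
\[\psi-\tilde\psi=(\psi-\tilde\psi)RT_\mathbb{C}+\tilde\psi\,(R-\tilde R)T_\mathbb{C}.\]
Hence
\[\psi-\tilde\psi=\big[\tilde\psi(R-\tilde R)T_\mathbb{C}\big](\mathcal{I}-RT_\mathbb{C})^{-1}.\]
By the Neumann series bound in the proof of the preceding theorem, $\|(\mathcal{I}-RT_\mathbb{C})^{-1}\|\leq(1-\delta)^{-1}$. By \eqref{dbd12} applied to $\tilde r$, $\sup_x\|\tilde\psi\|_{\mathbf{C}_k^\alpha}\leq(1-\delta)^{-1}$. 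Together with the operator difference bound above, this yields
\[\sup_{x}\|\psi-\tilde\psi\|_{\mathbf{C}_k^\alpha(\mathbb{C})}\leq\frac{M^{p,q,\alpha}}{(1-\delta)^2}\big(\|r_+-\tilde r_+\|_{L_k^{q,2}(\mathbb{C})}+\|r_--\tilde r_-\|_{L_k^{q,2}(\mathbb{C})}\big).\]

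\textbf{Step 2: Lipschitz bound for the potentials.} From the proof of the preceding theorem, $u=\langle\psi_{11}r_+\rangle_++\langle\psi_{11}r_+\rangle_-$. Write
\[u-\tilde u=\langle(\psi_{11}-\tilde\psi_{11})r_+\rangle_++\langle(\psi_{11}-\tilde\psi_{11})r_+\rangle_-+\langle\tilde\psi_{11}(r_+-\tilde r_+)\rangle_++\langle\tilde\psi_{11}(r_+-\tilde r_+)\rangle_-.\]
Apply \eqref{dbd8} of Lemma \ref{dbl4} to each term; the lemma is bilinear, so it applies to differences. The first two terms are bounded by $\sup_x\|\psi_{11}-\tilde\psi_{11}\|_{C_k^\alpha}\,\|r_+\|_{L^{q,2}}$, which Step 1 controls, with $\|r_+\|$ bounded on the ball. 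The last two are bounded by $(1-\delta)^{-1}\|r_+-\tilde r_+\|_{L^{q,2}}$. The same argument with \eqref{dbd9} handles $v-\tilde v$. Taking the supremum over $x$ gives the claim.

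\textbf{Main obstacle.} The delicate point is Step 1, and specifically the identification of $\psi-\tilde\psi$ with the resolvent formula. This requires the operator norm of $RT_\mathbb{C}$ on $L_x^\infty(\mathbb{R},\mathbf{C}_k^\alpha(\mathbb{C}))$ to be controlled uniformly in $x$, including across $x=0$ where the splitting \eqref{dbc5} changes. I would verify this directly from Lemmas \ref{dble2} and \ref{dble3}, whose constants are independent of $x$ because the exponentials are bounded by $1$ on the regions used. The Lipschitz constant is then only local: it is valid on any ball where the small norm condition holds, and it degenerates as $\delta\to1$. This is what the corollary should be taken to assert.
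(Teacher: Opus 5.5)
Your proposal is correct and follows essentially the same route as the paper. You use a resolvent identity for $\psi-\tilde\psi$ (you get $[\tilde\psi(R-\tilde R)T_\mathbb{C}](\mathcal{I}-RT_\mathbb{C})^{-1}$ by subtracting the integral equations, where the paper writes the mirror form $I(\mathcal{I}-RT_\mathbb{C})^{-1}(R-\tilde R)T_\mathbb{C}(\mathcal{I}-\tilde RT_\mathbb{C})^{-1}$), bound it via Theorem \ref{dbco1} and \eqref{dbd12}, and then split $u-\tilde u$ bilinearly and apply \eqref{dbd8}--\eqref{dbd9}, obtaining, like the paper's restriction $\|r_\pm\|,\|\tilde r_\pm\|<B$, only a local Lipschitz constant on a ball where the small-norm condition holds uniformly.
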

{\bf Proof}. Let $\tilde{r}_\pm(k)$ and $\tilde\psi(k;x)=(\tilde\psi_{mn})$ are relevant to the potential $\tilde{u}(x), \tilde{v}(x)$ admitting the relation \eqref{dbd2}, and $\tilde{r}_\pm(k)$,  $\tilde\psi(k;x)=(\tilde\psi_{mn})$ satisfy the relation \eqref{dbc27}, then
\begin{equation}\label{dbd15}
\begin{aligned}
u-\tilde{u}=&\langle\psi_{11}(r_+-\tilde{r}_+)\rangle_+ + \langle(\psi_{11}-\tilde\psi_{11})\tilde{r}_+\rangle_+\\
&+\langle\psi_{11}(r_+-\tilde{r}_+)\rangle_- + \langle(\psi_{11}-\tilde\psi_{11})\tilde{r}_+\rangle_-.
\end{aligned}
\end{equation}
Using \eqref{dbd8} and \eqref{dbd9}, we can get
\[\begin{aligned}
|u-\tilde{u}|\leq M_{14}^{p,q}\big[\sup\limits_{x\in\mathbb{R}}\|\psi_{11}\|_{C_k^{\alpha}(\mathbb{C})}\|r_+-\tilde{r}_+\|_{L_k^{q,2}(\mathbb{C})}\\
+\sup\limits_{x\in\mathbb{R}}\|\psi_{11}-\tilde\psi_{11}\|_{C_k^{\alpha}(\mathbb{C})}\|\tilde{r}_+\|_{L_k^{q,2}(\mathbb{C})}\big].
\end{aligned}\]

From the relation \eqref{dbc27}, we find that
\[\begin{aligned}
\psi-\tilde\psi&=I(\mathcal{I}-RT_\mathbb{C})^{-1}-I(\mathcal{I}-\tilde{R}T_\mathbb{C})^{-1}\\
&=I(\mathcal{I}-RT_\mathbb{C})^{-1}~(R-\tilde{R})T_\mathbb{C}~(\mathcal{I}-\tilde{R}T_\mathbb{C})^{-1}.
\end{aligned}\]
In a similar way, we have
\[\begin{aligned}
&\sup\limits_{x\in\mathbb{R}}\|\psi-\tilde\psi\|_{\mathbf{C}_k^{\alpha}(\mathbb{C})}\\
&\leq\frac{M^{p,q}(\|r_+-\tilde{r}_+\|_{L_k^{q,2}}+\|r_--\tilde{r}_-\|_{L_k^{q,2}})}{[1-M_{16}^{p,q}(\|r_+\|_{L_k^{q,2}(\mathbb{C})}+\|r_-\|_{L_k^{q,2}(\mathbb{C})})][1-\tilde{M}_{16}^{p,q}(\|\tilde{r}_+\|_{L_k^{q,2}(\mathbb{C})}+\|\tilde{r}_-\|_{L_k^{q,2}(\mathbb{C})})]}.
\end{aligned} \]

If let $\|r_\pm\|_{L_k^{q,2}},\|\tilde{r}_\pm\|_{L_k^{q,2}}<B$, then, from \eqref{dbd15}, we can obtain
\begin{equation}\label{dbd16}
|u-\tilde{u}|\leq M(p,q,B)\big(\|r_+-\tilde{r}_+\|_{L_k^{q,2}}+\|r_--\tilde{r}_-\|_{L_k^{q,2}}\big).
\end{equation}
Similar discussion can derive the same condition as \eqref{dbd16} for the potential $v$. \qquad $\square$

\setcounter{equation}{0}
\section{Conclusions and Discussions}
We presented properties for the integral operator $\hat{T}_G$ associated with the Dbar equation $\bar\partial\phi(k)=f(k)$ on the bounded domain $G$, and showed that $\hat{T}_G$ was a completely continuous operator on the space $L^p(\bar{G}), p>2$ and mapped it into the space $C^\gamma(\bar{G}), \gamma=\frac{p-2}{p}$. Since the relevant constants were dependent on the radius of the domain, so it was difficult to let the operator was small norm, which implied that the condition for the exist of the inverse operator $(I-\hat{T}_G)^{-1}$ was difficult to achieve.

To apply the Dbar problem to integrable system, one needed to
introduce the spectral transformation off-diagonal matrix $R$ admitting the linear evolution equation about the physical variable $x$ as $\partial_xR(k;x)=-ik[\sigma_3,R(k;x)]$.
To control the convergence of the relevant integral, we split the spectral transformation function $R$ into two nilpotent matrices $w_\pm(k;x)$ containing the factors $r_\pm(k)\mathrm{e}^{\mp2ikx}$, and further split the complex plane $\mathbb{C}$ into lower and upper half planes $\mathbb{C}^\pm$. So the integral $\psi R{T}_\mathbb{C}$ was decomposed into the sum of the factors $\psi w_\mp T_{\mathbb{C}^\pm}(k;x)\chi_{\{x>0\}}$ and $\psi w_\pm T_{\mathbb{C}^\pm}(k;x) \chi_{\{x<0\}}$.

We further split the lower and upper half planes $\mathbb{C}^\pm$ into lower/upper inner half unit domain $E_1^\pm$ and lower/upper outer half unit domain $E_2^\pm$. By introducing the space $L^{p,\nu}(\mathbb{C}^\pm)$, we gave the prior estimates for the decomposed integrals $\psi w_\mp T_{\mathbb{C}^\pm}(k;x)\chi_{\{x>0\}}$ and $\psi w_\pm T_{\mathbb{C}^\pm}(k;x) \chi_{\{x<0\}}$, and showed that if $\psi\in {L}_x^\infty(\mathbb{R},\mathbf{L}_k^{p,0}(\mathbb{C}))$, $r_\pm \in L_k^{q,2}(\mathbb{C})$, then $\psi RT_\mathbb{C}(k;x)\in L_x^\infty(\mathbb{R},\mathbf{C}_k^\alpha(\mathbb{C}))$. Using the fact $\mathbf{C}_k^\alpha(\mathbb{C}) \subset L_k^{q,2}(\mathbb{C})$ to choose $\psi\in {L}_x^\infty(\mathbb{R},\mathbf{C}_k^{\alpha}(\mathbb{C}))$, we found that small norm condition for the operator $RT_\mathbb{C}$, and showed that the inverse operator $(\mathcal{I}-RT_\mathbb{C})^{-1}$ was exist.

In this paper, we consider the no soliton case, that is no Dirac delta factors in the spectral transformation $R$. We showed that, if $r_\pm \in L_k^{q,2}(\mathbb{C})$, there exists a unique solution $\psi=I(\mathcal{I}-RT_\mathbb{C})^{-1}\in {L}_x^\infty(\mathbb{R},\mathbf{C}_k^{\alpha}(\mathbb{C}))$ for the Dbar problem.
By using the Dbar dressing method, we constructed the AKNS spectral problem $\partial_x\psi=-ik[\sigma_3,\psi]+Q\psi$,
and established the potential reconstruction $Q=-i[\sigma_3,\langle \psi R\rangle]$. By decomposing $\langle \psi R\rangle$ into $\langle \psi w_\mp\rangle_\pm$ and $\langle \psi w_\pm\rangle_\pm$, we showed that the map from $r_\pm\in L_k^{q,2}(\mathbb{C})$ to $u,v\in L_x^\infty(\mathbb{R})$ is Lipschitz continuous.

It is remarked that, in the direct scattering analysis for the AKNS spectral problem, the potentials $u,v$ are usually be taken as $L^2(\mathbb{R})$ functions. So, for the this condition, we find from \eqref{dbd11} that
\begin{equation}\label{dbe1}
\begin{aligned}
\|u(x)\|_{L^2(\mathbb{R})}\leq M_{14}^{p,q}\|\psi_{11}\|_{L_x^2(\mathbb{R},C_k^{\alpha}(\mathbb{C}))}\|r_+\|_{L_k^{q,2}(\mathbb{C})},\\
\|v(x)\|_{L^2(\mathbb{R})}\leq M_{15}^{p,q}\|\psi_{22}\|_{L_x^2(\mathbb{R},C_k^{\alpha}(\mathbb{C}))}\|r_-\|_{L_k^{q,2}(\mathbb{C})}.
\end{aligned}
\end{equation}
Moreover, if $\psi\in L_x^2(\mathbb{R},\mathbf{C}_k^{\alpha}(\mathbb{C}))$, then we can get, from \eqref{dbc21} and \eqref{dbc22}, that
\begin{equation}\label{dbe2}
\|\psi RT_\mathbb{C}(k;x)\|_{L_x^2(\mathbb{R})}\leq M_{10}^{p,q}(\|r_+\|_{L_k^{q,2}(\mathbb{C})}+\|r_-\|_{L_k^{q,2}(\mathbb{C})})\|\psi\|_{L_x^2(\mathbb{R},C_k^{\alpha}(\mathbb{C}))},
\end{equation}
and for $k_1,k_2\in\mathbb{C}$,
\begin{equation}\label{dbe3}
\begin{aligned}
&\|\psi RT_\mathbb{C}(k_1;x)-\psi RT_\mathbb{C}(k_2;x)\|_{L_x^2(\mathbb{R})}\\
&\leq M_{11}^{p,q}(\|r_+\|_{L_k^{q,2}(\mathbb{C})}+\|r_-\|_{L_k^{q,2}(\mathbb{C})})\|\psi\|_{L_x^2(\mathbb{R},C_k^{\alpha}(\mathbb{C}))}\|k_1-k_2|^\alpha,
\end{aligned}
\end{equation}
where $\frac{1}{p}+\frac{1}{q}<\frac{1}{2}$, and $\alpha=1-2\big(\frac{1}{p}+\frac{1}{q}\big)$. So the conditions \eqref{dbe2} and \eqref{dbe3} imply that the operator $RT_\mathbb{C}$ is also a map from space $L_x^2(\mathbb{R},\mathbf{C}_k^{\alpha}(\mathbb{C}))$ to $L_x^2(\mathbb{R},\mathbf{C}_k^\alpha(\mathbb{C}))$, and
\begin{equation}\label{dbe4}
\|RT_\mathbb{C}(k;x)\|_{L_x^2(\mathbb{R},\mathbf{C}_k^{\alpha}(\mathbb{C}))\to L_x^2(\mathbb{R},\mathbf{C}_k^\alpha(\mathbb{C}))}\leq M^{p,q}(\|r_+\|_{L_k^{q,2}(\mathbb{C})}+\|r_-\|_{L_k^{q,2}(\mathbb{C})}).
\end{equation}
Under the condition of small norm, the inverse operator $(\mathcal{I}-RT_\mathbb{C})^{-1}$ is also exist, and it can be used to define the eigenfunction $\psi$ in \eqref{dbc27}. We similar have
\[\begin{aligned}
\|u(x)\|_{L^2(\mathbb{R})}\leq \frac{M_{14}^{p,q}\|r_+\|_{L_k^{q,2}(\mathbb{C})}}{1-M_{16}^{p,q}(\|r_+\|_{L_k^{q,2}(\mathbb{C})}+\|r_-\|_{L_k^{q,2}(\mathbb{C})})},\\
\|v(x)\|_{L^2(\mathbb{R})}\leq \frac{M_{15}^{p,q}\|r_-\|_{L_k^{q,2}(\mathbb{C})}}{1-M_{16}^{p,q}(\|r_+\|_{L_k^{q,2}(\mathbb{C})}+\|r_-\|_{L_k^{q,2}(\mathbb{C})})},
\end{aligned}\]
which imply that the map
\[L^{q,2}(\mathbb{C})\ni r_\pm(k) \to u(x),v(x)\in L^2(\mathbb{R}),\]
is Lipschitz continuous.

It is also remarked that, the paper only considered the well-posedness for the AKNS spectral problem. If consider the Dbar problem associated with a nonlinear integrable equation, one needs to introduce a time evolution equation for the spectral transformation function $R$, for example $\partial_tR=A(k)[\sigma_3,R]$, where $A(k)$ is a polynomial.
So the exponents will take the form $e^{\pm 2i\theta(k;x,t)}$. To ensure the convergence of the relevant integral, one needs to give a more complicated decomposition about the domain.
These cases will be considered in our subsequent articles.

\section*{Statements and Declarations}
\noindent{\bf Authorship contribution statements} Junyi Zhu: Conceptualization, Investigation, Methodology, Validation, Writing-review and editing; Huan Liu: Investigation, Methodology, Validation, Visualization, Project administration.

\noindent{\bf Conflict of interest} The authors declare that they have no conflicts of interest.

\noindent{\bf Fundings}
This work was supported by National Natural Science Foundation of China (Grant No. 12571268), Natural Science Foundation of Henan Province (Grant No. 262300421239), and Program for Science \& Technology Innovation Talents in Universities of Henan Province (Grant No. 26HASTIT049).


\end{document}